\documentclass[12pt]{article}

\addtolength{\oddsidemargin}{-.5in}%
\addtolength{\evensidemargin}{-1in}%
\addtolength{\textwidth}{1in}%
\addtolength{\textheight}{1.7in}%
\addtolength{\topmargin}{-1in}%
\usepackage{graphicx}
\usepackage[numbers]{natbib}

\usepackage{amsmath}
\usepackage{amssymb}
\usepackage{amsthm}
\usepackage{bm}
\usepackage{booktabs}
\usepackage{color}
\usepackage{mathtools}

\usepackage{hyperref}
\hypersetup{
    colorlinks=true,
    allcolors=blue,
}

\usepackage[capitalise,nameinlink]{cleveref}

\usepackage{nicefrac}

\usepackage{subfig}

\usepackage[T1]{fontenc}
\usepackage[utf8]{inputenc}

\graphicspath{{./Figures/}}

\usepackage{pgfplots}


\definecolor{clr1}{RGB}{228,26,28}
\definecolor{clr2}{RGB}{55,126,184}
\definecolor{clr3}{RGB}{77,175,74}
\definecolor{clr4}{RGB}{152,78,163}
\definecolor{clr5}{RGB}{255,127,0}
\pgfplotscreateplotcyclelist{customPlotColours}{
	{color=clr1},
	{color=clr2},
	{color=clr3},
	{color=clr4},
	{color=clr5}
}

\newlength{\minorgridsize}
\setlength{\minorgridsize}{0.75em}
\pgfplotsset{compat=1.17}
\pgfplotsset{
	table/search path={CSVdata},
	legend style={font=\footnotesize},
	tick label style={font=\scriptsize},
	label style={font=\footnotesize},
	title style={font=\footnotesize},
	mygridstyle/.style = {
		grid=both,
		minor grid style={color=lightgray, densely dotted},
		major grid style={color=lightgray},
	},
	ti*plot/.style = {
		mygridstyle,
		every axis plot/.style = {sharp plot, thick},
		scale only axis,
		xmin=0, xmax=1,
		ymin=0, ymax=1,
		xtick distance={0.1}, minor x tick num = {3},
		ytick distance={0.2}, minor y tick num = {1},
		enlarge x limits={abs=0.01875}, 
		enlarge y limits={abs=0.075}, 
		width=41.5\minorgridsize,height=11.5\minorgridsize,
		table/x = {p},
		table/col sep = {comma},
		xlabel = {Probability of observation, \(p\)},
		ylabel = {Observation time, \(t_i^*\)},
		legend entries = {\(t_1^*\),\(t_2^*\),\(t_3^*\),\(t_4^*\),},
		legend pos = south west,
		cycle list name = customPlotColours,
		separators/.style = {
			forget plot,
			dotted,
		},
	},
	FI*plot/.style = {
		mygridstyle,
		every axis plot/.style = {sharp plot},
		scale only axis,
		xmin=0, xmax=1,
		ymin=0, ymax=2.5,
		xtick distance={0.1}, minor x tick num = {3},
		ytick distance={0.5}, minor y tick num = {1},
		enlarge x limits={abs=0.01875}, 
		enlarge y limits={abs=0.1875}, 
		width=41.5\minorgridsize,height=11.5\minorgridsize,
		table/x = {p},
		table/col sep = {comma},
		xlabel = {Probability of observation, \(p\)},
		ylabel = {},
		legend entries = {\FI*[2]{\lambda},\FI*[3]{\lambda},\FI*[4]{\lambda},},
		legend pos = south east,
		cycle list name = customPlotColours,
	},
}

\newcommand{\tiStarPlot}[1]{
	\begin{tikzpicture}
	  \begin{axis}[ti*plot]
		  \addplot+[separators] table [y=s1] {#1};
		  \addplot table [y=t1*] {#1};
	  \end{axis}
	\end{tikzpicture}
}

\usepackage{fontawesome5}

\usepackage{suffix}
\newcommand*{\FI}[2][n]{\ensuremath{\mathcal{FI}_{\bm{Y}_{#1}}(#2)}}
\WithSuffix{\newcommand*}\FI*[2][n]{\ensuremath{\mathcal{FI}^*_{\bm{Y}_{#1}}(#2)}}
\newcommand*{\FIApprox}[2][n]{\ensuremath{\widetilde{\mathcal{FI}}_{\bm{Y}_{#1}}(#2)}}
\newcommand*{\LI}[3][n]{\ensuremath{\mathcal{L}_{\bm{Y}_{#1}}(#2 \mid #3)}}
\newcommand*{\SL}[2][n]{\ensuremath{\mathcal{SL}_{S,\bm{Y}_{#1}}(#2)}}
\newcommand*{\WC}[2][n]{\ensuremath{\mathit{WC}_{#1}(#2)}}
\newcommand*{\DV}[2][i]{\ensuremath{\mathfrak{D}_{#1}(#2)}}
\newcommand*{\vect}[1]{\ensuremath{\bm{#1}}}

\newtheorem{definition}{Definition}[section]
\newtheorem{proposition}[definition]{Proposition}
\newtheorem{lemma}[definition]{Lemma}
\newtheorem{theorem}[definition]{Theorem}

\newtheorem{remark}[definition]{Remark}

\setcounter{page}{1} \pagenumbering{arabic}

\newcommand{\myqed}{\hskip 10pt $\blacksquare$}

\usepackage{ifthen}
\newcommand*{\norm}[2][]{\ifthenelse{ \equal {#2} {} }{\left\lVert#1\right\rVert}{\left\lVert#2\right\rVert}_{#1}}

\begin{document}


\allowdisplaybreaks

\title{Optimal Experimental Design for Partially Observable Pure Birth Processes}

\author{
    Ali Eshragh {\normalsize{}\href{mailto:ali.eshragh@jhu.edu}{\faIcon{envelope}}} \\
    Carey Business School, \\
    Johns Hopkins University, MD, USA \\
    International Computer Science Institute, \\
    University of California at Berkeley, CA, USA
    \\[1.75ex]
    Matthew P. Skerritt {\normalsize{}\href{matt.skerritt@rmit.edu.au}{\faIcon{envelope}}} \\
    \hspace{5em}Mathematical Sciences,\hspace{5em} \\
    RMIT University, VIC, Australia
    \\[1.75ex]
    Bruno Salvy {\normalsize{}\href{Bruno.Salvy@inria.fr}{\faIcon{envelope}}} \\
    INRIA, Ens de Lyon, France
    \\[1.75ex]
    Thomas McCallum {\normalsize{}\href{thomas.mccallum119@gmail.com}{\faIcon{envelope}}} \\
    School of Information and Physical Sciences, \\ University of Newcastle, NSW, Australia 
}
\maketitle


\begin{abstract}
    We develop an efficient algorithm to find optimal observation times by maximizing the Fisher information for the birth rate of a partially observable pure birth process involving $n$ observations. 
    Partially observable implies that at each of the $n$ observation time points for counting the number of individuals present in the pure birth process, each individual is observed independently with a fixed probability $p$, modeling detection difficulties or constraints on resources.
    We apply concepts and techniques from generating functions, using a combination of symbolic and numeric computation, to establish a recursion for evaluating and optimizing the Fisher information.
    Our numerical results reveal the efficacy of this new method.
    An implementation of the algorithm is available publicly.
\end{abstract}

\noindent%
{\it Keywords:}
	Fisher information,
    Optimal observation times,
    Generating functions
 \noindent%
\vfill

\vfill
\newpage

	
\section{Introduction} \label{SecIntro}

\emph{Optimal experimental design} is a statistical methodology for selecting efficient and effective ways to gather data (c.f., \citet{Lopez-Fidalgo_2023,Goos2018}). It aims to maximize the amount of information obtained from the experiments, quantified by the Fisher information. 
The significance of Fisher information lies in its connection to the asymptotic variance of maximum likelihood estimators.
By leveraging Fisher information one can find (asymptotically) unbiased estimators with minimum-variances, where the optimality of a design depends on the statistical model and is assessed with respect to some criterion.
As an example, the widely recognized ``D-optimality'' criterion focuses on maximizing the determinant of the Fisher information. This motivates the computation of the Fisher information for different statistical and stochastic models, particularly for continuous-time Markov chains.

Continuous-time Markov chains (CTMCs) have gained significant popularity in modeling a range of phenomena such as evolutionary, ecological, and epidemiological processes, owing to their capability to efficiently capture the discrete, interactive, and stochastic aspects of these processes \cite{Bla,Kee,Biron-Lattes2023}. A crucial element in the application of CTMCs is the estimation of model parameters. Initial research was directed at parameter estimation for stochastic processes like pure birth \cite{Kei1}, pure death \cite{Pag2010a}, and birth-and-death processes \cite{Kei2,Pag2009,Pag2010b,Pag3,RosJ}, under both continuous and equidistant discrete observation intervals. The scope was broadened by \citet{Becker}, who were successful in formulating an explicit expression for the Fisher information pertaining to the pure birth process, and derived optimal observation times for a pure birth process by applying optimal experimental design methods.

\citet{Bean,Bean2} further broadened the applicability of this concept. They investigated the Fisher information for the pure birth process under discrete, non-equidistant observation times, namely, the partially observable pure birth process (\texttt{POPBP}), where each observation is modeled as a binomial random variable dependent on the actual population size. This approach adds realism and complexity to the analysis, particularly relevant in the early stages of pest and disease invasions or cell growth experiments, where smaller populations make diffusion approximations less effective. \citet{Bean} showed that the \texttt{POPBP} is non-Markovian under any order. 
In addition, \citet{Bean2} developed an efficient approximation to find and optimize the Fisher information, which was previously restricted to only two observations. As a practical application, \citet{eshragh_2020_plos} recently utilized these results to model and analyze the dynamics of the COVID-19 population in its early stages in Australia.

The use of generating functions in combinatorics and probability theory is classical~\cite{Feller1968,FlajoletSedgewick2009,Stanley1986,Stanley1999,generatingfunctionology}. In many cases, notably in relation to Markov chains, the generating functions turn out to be rational functions, that are themselves related to linear recurrences with constant coefficients. This is the situation we encounter in this article and exploit algorithmically to establish a recursion to compute the Fisher information for a \texttt{POPBP}. This technique allows us to efficiently derive the optimal experimental design numerically for more than two observations. To the best of our knowledge, this is the first attempt in applying generating functions in the context of optimal experimental design.

This article is structured as follows: 
\cref{SecOED} presents optimal experimental design methods and shows how to find the Fisher information for a pure birth process.
\Cref{SecPOPBP} introduces the partially observable pure birth process, formulates its Fisher information, and develops analytical results for the structure of optimal experimental design.
\Cref{SecGF} applies generating function techniques to establish an efficient recursion for calculating the Fisher information for partially observable pure birth processes.
\Cref{SecNE_Method,SecNE_Results} exploit the methodology developed in \cref{SecGF} to run comprehensive numerical experiments for different values of model parameters. 
\Cref{SecConc} concludes and addresses future work.
	

\section{Optimal Experimental Design} \label{SecOED}

Consider the stochastic process $\{X_t, t\geq 0\}$, where the random variable $X_t$ is characterized by a probability mass/density function $f_{X_t}(x_t;\bm{\theta})$, with $\bm{\theta}=(\theta_1,\dots,\theta_k)$ representing an unknown parameter vector. To estimate this vector accurately, we employ the method of maximum likelihood estimation (MLE). This involves taking $n$ observations $X_{t_1},\ldots,X_{t_n}$ of the process and maximizing the likelihood function:
\begin{align*}
\mathcal{L}(\bm{X}_n \mid \bm{\theta}) & = f_{\bm{X}_n}(\bm{x}_n;\bm{\theta}),
\end{align*}
where $\bm{X}_n = (X_{t_1},\ldots,X_{t_n})$ denotes a random vector of observations, $\bm{x}_n = (x_{t_1},\ldots,x_{t_n})$ its realization, and $f_{\bm{X}_n}(\bm{x}_n;\bm{\theta})$ the joint probability mass/density function of the observed sample.

It is well-known that MLEs asymptotically follow a normal distribution, characterized by a variance matrix denoted as $\mathrm{Var}(\bm{\theta})$. The inverse of this matrix introduced in \cref{def:FI} plays a crucial role in statistical estimation theory.

\begin{definition}[Fisher Information]\label{def:FI}
    The inverse of the variance matrix $\mathrm{Var}(\bm{\theta})$, referred to as the \textit{Fisher information matrix} and denoted by $\mathcal{FI}$, is a $k \times k$ matrix defined as:
    \begin{align*}
        \mathcal{FI}(\bm{\theta}) & = {\mathrm{Var}(\bm{\theta})}^{-1}.
    \end{align*}
\end{definition}
\noindent The Fisher information matrix plays a key role in quantifying the amount of information that a random sample carries about an unknown parameter upon which the likelihood depends \cite{Spall2005}.

An \emph{optimal experimental design} is defined as an experimental design that optimizes an appropriate function of the Fisher information matrix \cite{Lopez-Fidalgo_2023}. Common optimality criteria identified in the literature include:
\begin{itemize}
    \item \emph{A-optimality}: Minimizing the trace of the inverse Fisher information matrix, which is equivalent to minimizing the trace of the variance matrix,
    
    \item \emph{D-optimality}: Maximizing the determinant of the Fisher information matrix,
    
    \item \emph{E-optimality}: Maximizing the minimum eigenvalue of the Fisher information matrix,
    
    \item \emph{T-optimality}: Maximizing the trace of the Fisher information matrix.
\end{itemize}
It is important to note that if the parameter vector $\bm{\theta}$ contains only a single parameter, then $\mathcal{FI}(\bm{\theta})$ becomes a scalar. In this simplified scenario, all the above criteria converge, effectively becoming equivalent to maximizing the Fisher information.

The Fisher information matrix can be calculated through one of the two following expectations (see, e.g., \citet[][Chapter 13]{Spall}):
\begin{align}
	\label{Equivalent-EquFIinGen}\mathcal{FI}_{\bm{X}_n}(\bm{\theta}) & = E_{\mathcal{L}(\bm{X}_n \mid \theta)}\left(\nabla_{\bm{\theta}} \log(\mathcal{L}(\bm{X}_n \mid \theta))^T \nabla_{\theta} \log(\mathcal{L}(\bm{X}_n \mid \bm{\theta}))\right) \\
	\label{EquFIinGen} & = -E_{\mathcal{L}(\bm{X}_n \mid \theta)}\left(H_{\bm{\theta}} \left(\log(\mathcal{L}(\bm{X}_n \mid \bm{\theta}))\right)\right),
\end{align}
where $\nabla_{\bm{\theta}}$ denotes the gradient vector and $H_{\bm{\theta}}$ the Hessian matrix, both with respect to the parameter vector $\bm{\theta}$. Note that in \cref{Equivalent-EquFIinGen} the superscript $T$ indicates the transpose operation. For a single parameter ($k=1$), these expressions simplify to the first and second derivatives of the log-likelihood function with respect to $\theta$, thus reducing the Fisher information to a scalar.

\Cref{EquFIinGen} demonstrates that the calculation of the Fisher information matrix relies on the likelihood function $\mathcal{L}(\bm{X}_n \mid \bm{\theta})$. Thus, if computing the likelihood function is complex or infeasible, this complexity is likely to carry over to the Fisher information's calculation. Additionally, even possessing an explicit expression for the likelihood function does not guarantee the straightforward derivation of the Fisher information.

An exception to these challenges occurs in the case of observations derived from a \emph{pure birth process}, where both the likelihood function and the Fisher information can be explicitly determined, presented in \cref{def:PBP}.

\begin{definition}[Pure Birth Process, \texttt{PBP}]\label{def:PBP}
    The stochastic process $\{X_t, t\geq 0\}$ is called a pure birth process (\texttt{PBP}) with a birth rate parameter $\lambda > 0$, if $x_t$ represents the population size at time $t$ with the transition rate to the next state, $x_{t}+1$, is precisely $\lambda x_t$.
\end{definition}

Throughout, we assume the initial population size, $x_0$, at time $t_0=0$ is known.
Furthermore, for $0 \leq t_1 < t_2$, the conditional probability mass function of the random variable $(X_{t_2} \mid X_{t_1}=x_{t_1})$ over the values of $x_{t_2}=x_{t_1},x_{t_1}+1,\ldots$ is
\begin{align}\label{EquDistXt2|Xt1}
	P_{(X_{t_2} \mid X_{t_1})}(x_{t_2} \mid x_{t_1}) & = \binom{x_{t_2}-1}{x_{t_1}-1}{\upsilon_{1,2}}^{x_{t_1}}(1-\upsilon_{1,2})^{x_{t_2}-x_{t_1}},
\end{align}
where $\upsilon_{1,2}=e^{-\lambda(t_2-t_1)}$ (see, e.g., \citet[Chapter 5]{RossSM}).

\citet{Becker} extensively studied the Fisher information of observations obtained from the \texttt{PBP} to estimate the unknown birth rate parameter $\lambda$. They demonstrated that for observations $X_{t_1}, \ldots, X_{t_n}$ from the \texttt{PBP} with parameter $\lambda$ at specific observation times $t_1, \ldots, t_n$ within a predetermined time horizon $t_n = \tau$, the likelihood function for these observations is given by:
\begin{align}\label{EquLF4PBP}
    \mathcal{L}_{\bm{X}_n}(\bm{x}_n \mid \lambda) = \prod_{i=1}^{n} \binom{x_{t_{i}}-1}{x_{t_{i-1}}-1}\upsilon_{i-1,i}^{x_{t_{i}}}(1-\upsilon_{i-1,i})^{x_{t_{i}}-x_{t_{i-1}}},
\end{align}
where $\upsilon_{i-1,i} = e^{-\lambda(t_i-t_{i-1})}$. This representation of the likelihood function, as a product, facilitates the evaluation of the Fisher information via \cref{EquFIinGen}.

Utilizing this formulation, \citet{Becker} derived an explicit expression for the Fisher information for the random observation vector $\bm{X}_n$ used in estimating $\lambda$:
\begin{align}\label{EquFIofPBP}
    \mathcal{FI}_{\bm{X}_n}(\lambda) = x_0 \sum_{i=1}^{n} \frac{(t_i-t_{i-1})^2}{e^{-\lambda t_{i-1}}-e^{-\lambda t_{i}}}.
\end{align}
Furthermore, they showed that with given values of $\tau$, $n$, and $\lambda$, the optimal experimental design $(t_1^*, t_2^*, \dots, t_n^*)$ can be uniquely determined by solving the following optimization equations:
\begin{align*}
    \varphi_1(\lambda(t_i-t_{i-1})) = \varphi_2(\lambda(t_{i+1}-t_i))\ \ \text{for}\ i=1,\dots,n-1,
\end{align*}
where the functions $\varphi_1$ and $\varphi_2$ are defined as:
\begin{align*}
    \varphi_1(x) := \frac{x(2e^x - x - 2)}{(e^x - 1)^2},\ \ \varphi_2(x) := \frac{xe^x(2e^x - xe^x - 2)}{(e^x - 1)^2}.
\end{align*}
For large sample sizes, an approximate solution to these equations simplifies the experimental design process:
\begin{align}\label{ApxOptFIofPBP}
    t_i^* \approx \frac{3}{\lambda}\log\left(1 + \frac{i}{n}\left(e^{\frac{\lambda\tau}{3}} - 1\right)\right)\ \ \text{for}\ i=1,\dots,n.
\end{align}

Although this approach is interesting, it may not be practical due to real-world restrictions, such as time and budget constraints, which may prevent us from observing and counting all individuals in the population at each observation time \(t_i\). To address this issue, we introduce a new stochastic process, a \emph{partially observable pure birth process} (\texttt{POPBP}), which will be explained in \cref{SecPOPBP}.


\section{Partially Observable Pure Birth Process}
\label{SecPOPBP}

Consider a \texttt{PBP} $\{X_t, t \geq 0\}$ with an unknown birth rate $\lambda$. To estimate this unknown parameter $\lambda$, we aim to take $n$ observations at times $0 \leq t_1 \leq \dots \leq t_n = \tau$. Let us now assume that at each observation time $t_i$, we may not be able to observe the entire population size $X_{t_i}$ but can only observe a random sample from it. Consequently, we define a \texttt{POPBP} as follows:

\begin{definition}[Partially Observable Pure Birth Process, \texttt{POPBP} \cite{Bean}] \label{DefPOPBP}
    Consider the \texttt{PBP} $\{X_t, t \geq 0\}$ with birth rate $\lambda$. If random variables $Y_t$ is defined such that the conditional random variable $(Y_t \mid X_t = x_t)$ follows the \texttt{Bin}$(x_t, p)$ distribution, where
    \begin{align*}
    P_{(Y_t \mid X_t)}(y_t \mid x_t) &= \binom{x_t}{y_t} p^{y_t} (1-p)^{x_t-y_t} \quad \text{for } y_t = 0, \ldots, x_t,
    \end{align*}
    then the stochastic process $\{Y_t, t \geq 0\}$ is called the
    partially observable pure birth process (\texttt{POPBP}) with parameters $(\lambda, p)$.
\end{definition}

\begin{remark}
	\Cref{DefPOPBP} implies that, for a population size at time $t \in (0, \infty)$ equal to $x_t$, where each of these $x_t$ individuals can be observed independently with probability $p$, the random variable $Y_t$ then counts the total number of observed individuals at that time. Consequently, the \texttt{POPBP} with parameters $(\lambda, 1)$ simplifies to the \texttt{PBP} with the same parameter $\lambda$, because when $p=1$, every individual in the population is observed, mirroring the observation conditions of a \texttt{PBP}. Furthermore, it is assumed throughout that the parameter $p$ is both fixed and known.
\end{remark}

\citet{Bean} demonstrated that the \texttt{POPBP} $\{Y_t, t \geq 0\}$, characterized by parameters $(\lambda, p)$, does not exhibit the Markovian property. This characteristic means that the likelihood function for observations from the \texttt{POPBP} cannot be simplified utilizing the Markovian property, in contrast to the \texttt{PBP}. Taking into account this significant difference, \citet{Bean2} derived the likelihood function for the \texttt{POPBP} as follows:
\begin{gather}\label{EquLikeFnEta}
    \mathcal{L}_{\bm{Y}_n}(\bm{y}_n \mid \lambda, p) = \sum_{x_0 \leq x_{t_1} \leq \dots \leq x_{t_n}} \prod_{i=1}^{n} \eta_i(\bm{y}_n, \bm{x}_n),
    \shortintertext{where}
    \nonumber\eta_i(\bm{y}_n, \bm{x}_n) = \binom{x_{t_i}}{y_{t_i}} p^{y_{t_i}} (1-p)^{x_{t_i} - y_{t_i}} \binom{x_{t_i} - 1}{x_{t_{i-1}} - 1} \upsilon_{i-1,i}^{x_{t_{i-1}}} (1-\upsilon_{i-1,i})^{x_{t_i} - x_{t_{i-1}}}.
\end{gather}

\Cref{EquLikeFnEta} reveals that, unlike the likelihood function for a \texttt{PBP} (\cref{EquLF4PBP}), the likelihood function for the \texttt{POPBP} cannot be represented simply as a product form. This complexity suggests that calculations involving the likelihood function, including those for the Fisher information, will be considerably more challenging than those for the \texttt{PBP}. The Fisher information for the parameter $\lambda$, based on $n$ observations of the \texttt{POPBP}, is presented as:
\begin{align}\label{EquFI4POPBP}
    \mathcal{FI}_{\bm{Y}_n}(\lambda) = \displaystyle{\sum_{0\leq \bm{y}_n} \frac{(\frac{\partial}{\partial\lambda}\mathcal{L}_{\bm{Y}_n}(\bm{y}_n \mid \lambda,p))^2}{\mathcal{L}_{\bm{Y}_n}(\bm{y}_n \mid \lambda,p)}}.
\end{align}
The calculation of the partial derivative in \cref{EquFI4POPBP} can be stated in terms of functions $\eta_i$ as follows:
\begin{align}\label{EquDerLikeFn}
    \frac{\partial}{\partial\lambda}\mathcal{L}_{\bm{Y}_n}(\bm{y}_n \mid \lambda,p) = \sum_{1\leq x_{t_1}\leq \dots\leq x_{t_n}} \sum_{j=1}^{n} (t_j-t_{j-1}) \left(\frac{x_{t_j} \upsilon_{j-1,j}-x_{t_{j-1}}}{1-\upsilon_{j-1,j}}\right) \prod_{i=1}^{n} \eta_i(\bm{y}_n,\bm{x}_n).
\end{align}
Substituting \cref{EquLikeFnEta,EquDerLikeFn} into \cref{EquFI4POPBP} allows for the calculation of the Fisher information for the \texttt{POPBP}. However, this process does not lead to a simplified form as seen with the \texttt{PBP}, complicating numerical calculations and optimization efforts.

Notably, the computation in \cref{EquFI4POPBP} involves $n+2$ infinite series, including those over $x_{t_n}$ in both the numerator and denominator of the summand, as well as $n$ series over $y_{t_1},y_{t_2},\ldots,y_{t_n}$. To achieve a desirable precision level in numerical calculations of the Fisher information, \citet{Bean2} recommended a \emph{truncation criterion} based on Chebyshev's inequality, coupled with a \emph{relative-error criterion}. This approach ensures that the ratio of the summand to the cumulative sum up to the current point is below a predetermined significance level.

Numerical calculations for a \texttt{POPBP} are challenging due to the infinite summations required to compute the Fisher information. This complexity is magnified as $n$, the number of observation times, increases. Specifically, each additional observation necessitates the truncation of three more infinite series: one for calculating $\mathcal{FI}_{\bm{Y}_n}$ over $y_n$, another for $\mathcal{L}_{\bm{Y}_n}$ over $x_n$, and a third for the partial derivative $\nicefrac{\partial\mathcal{L}_{\bm{Y}_n}}{\partial\lambda}$ over $x_n$. Consequently, computation times can become prohibitively long, even for relatively small $n$. For example, with $n=3$ and $\lambda=2$, estimating optimal observation times for $p$ ranging from $0.01$ to $0.99$ in increments of $0.01$ is projected to take five years,\footnote{This estimate is based on our calculations implemented in \texttt{C++}.} highlighting the significant computational demands.

Moreover, as $\lambda$ increases, computation time escalates exponentially due to the truncation points being exponential functions of $\lambda$. For instance, maximizing the Fisher information for $n=2$ and varying $p$ from $0.01$ to $0.99$ by $0.01$ steps takes approximately $14$ hours for $\lambda=2$. However, increasing $\lambda$ from $2$ to $5$ raises the estimated computation time to two years, underscoring the exponential growth in computational demand with parameter increases.

\citet{Bean2} developed an approximation for the Fisher information in the \texttt{POPBP} with two observations (\(n=2\)) as follows:
{\small \begin{align}
	\nonumber \lefteqn{\widetilde{\mathcal{FI}}_{\bm{Y}_2}(\lambda) =\left(1+\frac{p}{\vartheta_{t_1}}\right)} \\
	\nonumber  & \ \ \ \ \ \times\frac{p\left(p+(1-p)(p\upsilon_{1,2}+(1-p)\vartheta_{t_2})-(1-p)(p\upsilon_{1,2}+(1-p)\vartheta_{t_2})^2\right)((t_2-t_1)p+(1-p)t_2 \vartheta_{t_1})^2}{(p+(1-p) \vartheta_{t_1})^2\left(p+p(1-p)\upsilon_{1,2}+(1-p)^2 \vartheta_{t_2}\right)^2(1-p\upsilon_{1,2}-(1-p)\vartheta_{t_2})} \\
	\label{EquAppFI2}  & \ \ \ - \left(\frac{p}{p+(1-p) \vartheta_{t_1}}\right) \left(\frac{p(t_2-t_1)^2 (p+(1-p)(1-\upsilon_{1,2})\upsilon_{1,2})}{(1-\upsilon_{1,2}) (p+(1-p) \upsilon_{1,2})^2}\right) + \frac{p t_1^2 \left(p+(1-p)(1-\vartheta_{t_1})\vartheta_{t_1}\right)}{(1-\vartheta_{t_1}) (p+(1-p) \vartheta_{t_1})^2},
\end{align}}
\noindent where \(\vartheta_{t}:=e^{-\lambda t} = \upsilon_{0,t}\). Their work demonstrates, both theoretically and numerically, that \cref{EquAppFI2} provides a highly accurate approximation of \(\mathcal{FI}_{\bm{Y}_2}(\lambda)\). Furthermore, they proved that as \(\lambda\) increases, the approximation error quickly diminishes to zero. Significantly, because \(\widetilde{\mathcal{FI}}_{\bm{Y}_2}(\lambda)\) does not involve any infinite summation, it enables the rapid approximation of the Fisher information for any \(\lambda\) value within seconds. The study also presented and numerically compared the optimal observation times derived from \cref{EquFI4POPBP,EquAppFI2} for various \(\lambda\) values in \cref{FigApplambda0.5,FigApplambda0.8,FigApplambda1,FigApplambda2,FigApplambda3,FigApplambda4,FigApplambda5} of \cref{SecNE_Results}.

Unfortunately, while \(\widetilde{\mathcal{FI}}_{\bm{Y}_2}(\lambda)\) offers an excellent approximation for \(n=2\) observation times, extending this approach to higher values of \(n\) becomes intractable due to the increasing computational complexity and the absence of straightforward analytical solutions. In \cref{SecGF}, we introduce a novel numerical algorithm designed to compute and maximize the Fisher information for the \texttt{POPBP} more efficiently, addressing these challenges. We conclude this section by demonstrating the \emph{rescaling} property of the optimal experimental design for the \(\texttt{POPBP}\), as articulated in \cref{ProTau1}, which plays a crucial role in enhancing the efficiency and applicability of experimental designs.

\begin{proposition}\label{ProTau1}
    If \((t_1^*,\dots,t_n^*)\) constitutes an optimal experimental design for a \texttt{POPBP} with parameters \((\lambda,p)\) and a time-horizon of 1, then for any fixed \(\tau>0\), the scaled design \((\tau t_1^*,\dots,\tau t_n^*)\) forms the corresponding optimal experimental design for a \texttt{POPBP} with parameters \((\lambda/\tau,p)\) and a time-horizon of \(\tau\).
\end{proposition}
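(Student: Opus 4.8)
The plan is to show that the Fisher information of the rescaled \texttt{POPBP} is exactly a positive constant ($\tau^{2}$) times the Fisher information of the original one, with the two designs related by $\bm t\mapsto\tau\bm t$, so that the two maximization problems have corresponding solutions. \textbf{Step 1 (invariance of the likelihood).} Because $\upsilon_{i-1,i}=e^{-\lambda(t_i-t_{i-1})}$, the likelihood in \cref{EquLikeFnEta} depends on $(\lambda,t_1,\dots,t_n)$ only through the products $\lambda t_1,\lambda(t_2-t_1),\dots,\lambda(t_n-t_{n-1})$, equivalently through $\lambda t_1,\dots,\lambda t_n$; I would record this by writing $\mathcal{L}_{\bm Y_n}(\bm y_n\mid\lambda,p)=G_{\bm y_n,p}(\lambda t_1,\dots,\lambda t_n)$ for a function $G_{\bm y_n,p}$ that involves the fixed, known quantities $x_0$ and $p$ only (the inner sums over $x_0\le x_{t_1}\le\dots\le x_{t_n}$ are absorbed into $G_{\bm y_n,p}$). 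Replacing $(\lambda,t_1,\dots,t_n)$ by $(\lambda/\tau,\tau t_1,\dots,\tau t_n)$ leaves every argument of $G_{\bm y_n,p}$ unchanged, so the likelihood is invariant under this simultaneous rescaling, and $\lambda/\tau>0$ since $\lambda,\tau>0$.

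\textbf{Step 2 (scaling of the Fisher information).} Differentiating the identity of Step~1 in $\lambda$ via the chain rule gives $\frac{\partial}{\partial\lambda}\mathcal{L}_{\bm Y_n}(\bm y_n\mid\lambda,p)=\sum_{i=1}^{n}t_i\,\partial_iG_{\bm y_n,p}(\lambda t_1,\dots,\lambda t_n)$, which is precisely \cref{EquDerLikeFn}. Evaluating this derivative for the rescaled process (rate $\lambda/\tau$, times $\tau t_i$) replaces each prefactor $t_i$ by $\tau t_i$ while leaving the arguments of $\partial_iG_{\bm y_n,p}$ unchanged, hence multiplies $\frac{\partial}{\partial\lambda}\mathcal{L}_{\bm Y_n}$ by the single constant $\tau$. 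Substituting into \cref{EquFI4POPBP}, the numerator of every summand acquires a factor $\tau^{2}$, the denominator is unchanged term by term, and the outer index set $\{0\le\bm y_n\}$ is the same, so $\mathcal{FI}_{\bm Y_n}(\lambda)$ for the \texttt{POPBP} with parameters $(\lambda/\tau,p)$ observed at $(\tau t_1,\dots,\tau t_n)$ equals $\tau^{2}$ times that of the \texttt{POPBP} with parameters $(\lambda,p)$ observed at $(t_1,\dots,t_n)$.

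\textbf{Step 3 (transfer of optimality).} The map $(t_1,\dots,t_n)\mapsto(\tau t_1,\dots,\tau t_n)$ is a bijection between the feasible designs for time-horizon $1$, namely $0\le t_1\le\dots\le t_n=1$, and those for time-horizon $\tau$, namely $0\le s_1\le\dots\le s_n=\tau$, since it preserves the monotonicity constraints and sends $t_n=1$ to $s_n=\tau$. As $\tau^{2}>0$ is independent of the design, Step~2 shows this bijection carries maximizers of the Fisher information to maximizers; and because $\lambda$ is the only unknown parameter, maximizing the scalar $\mathcal{FI}_{\bm Y_n}(\lambda)$ is exactly the notion of optimality for each criterion in \cref{SecOED}. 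Hence $(t_1^*,\dots,t_n^*)$ is optimal for horizon $1$ with rate $\lambda$ if and only if $(\tau t_1^*,\dots,\tau t_n^*)$ is optimal for horizon $\tau$ with rate $\lambda/\tau$. I do not anticipate a genuine obstacle: the argument is a change of variables, and the only care needed is the bookkeeping in Step~2 (that differentiating in $\lambda$ contributes exactly one power of $\tau$) together with the verification in Step~3 that the rescaling really is a bijection of the constrained feasible sets.
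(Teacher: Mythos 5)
Your proof is correct and follows essentially the same route as the paper's: both reduce the claim to the fact that under the simultaneous rescaling $(\lambda,t_i)\mapsto(\lambda/\tau,\tau t_i)$ the Fisher information changes only by the positive constant factor $\tau^2$, so maximizers are carried to maximizers through the obvious bijection of feasible designs. You merely supply the chain-rule bookkeeping that the paper's one-line justification leaves implicit (and, if anything, your direction of the scaling, $\mathcal{FI}^{\tau}=\tau^{2}\,\mathcal{FI}^{1}$, is the correct one, whereas the paper states it the other way around --- immaterial for the conclusion since $\tau^{2}>0$).
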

\begin{proof}
    Denote by \(\mathcal{FI}^1\) and \(\mathcal{FI}^{\tau}\) the Fisher information for a \texttt{POPBP} with parameters \((\lambda,p)\) over a time-horizon of 1, and for a \texttt{POPBP} with parameters \((\lambda/\tau,p)\) over a time-horizon of \(\tau\), respectively. According to \cref{EquFI4POPBP}, the Fisher information \(\mathcal{FI}^1\) for any arbitrary set of sampling times \((t_1,\dots,t_n)\) equates to \(\tau^2 \mathcal{FI}^{\tau}\) for the scaled set \((\tau t_1,\dots,\tau t_n)\). Therefore, if \((t_1^*,\dots,t_n^*)\) maximizes \(\mathcal{FI}^1\), the scaled set \((\tau t_1^*,\dots,\tau t_n^*)\) naturally maximizes \(\mathcal{FI}^{\tau}\), establishing its optimality for the latter process.
\end{proof}

\begin{remark}\label{RemTau1}
	Proposition \ref{ProTau1} implies that to find an optimal experimental design for a given \texttt{POPBP} with time-horizon $\tau$, we can find the corresponding optimal experimental design for the rescaled \texttt{POPBP} with time-horizon $1$ and then, by a simple linear transformation, convert it to an optimal experimental design of the original process. Thus, without loss of generality, we assume henceforth that $\tau=1$.
\end{remark}
    	
	
\section{Generating Functions for the Likelihood} \label{SecGF}

In this section, which can be considered the main contribution of this paper, we develop a new approach involving the use of generating functions to compute the Fisher information for higher values of $n$ and $\lambda$.

The \emph{generating function} of a sequence $g(z_1,\dots,z_n)$ indexed by non-negative integer variables $z_i$ is the formal power series
\begin{align*}
	\phi(u_1,\ldots,u_n) & = \sum_{z_n=0}^{\infty}\cdots\sum_{z_1=0}^{\infty}{g(z_1,\ldots,z_n) u_1^{z_1}\cdots u_n^{z_n}}.
\end{align*}
When the generating function $\phi$ is a rational function
\begin{align}\label{eqn:rational_generating_function_phi}
	\phi(u_1,\ldots,u_n) & = \frac{P(u_1,\ldots,u_n)}{Q(u_1,\ldots,u_n)},
\end{align}
with two polynomials $P$ and $Q$,
the sequence $g(z_1,\dots,z_n)$ satisfies a linear recurrence with constant coefficients obtained by equating the coefficients of the same powers of $u_1^{z_1}\cdots u_n^{z_n}$ on both sides of the identity
\begin{align}\label{EquivGenFun}
	Q(u_1,\ldots,u_n) \sum_{z_n=0}^{\infty}\cdots\sum_{z_1=0}^{\infty} g(z_1,\ldots,z_n) u_1^{z_1}\cdots u_n^{z_n} & = P(u_1,\ldots,u_n).
\end{align}

Motivated by this idea, we develop a recursive equation for the likelihood function of a \texttt{POPBP}, which we utilize to calculate and maximize the Fisher information. In our application, the initial population size $x_0$ is known, so it is not an input variable of the likelihood function. There is no difficulty in considering the initial population size as a random variable, which will be constant in our special case. Thus, the generating function of the likelihood function (\cref{EquLikeFnEta}) we consider is defined by
\begin{align}
	\nonumber \phi(\bm{u}_n) & = \sum_{y_{t_n}=0}^{\infty}\cdots\sum_{y_{t_1}=0}^{\infty}\sum_{x_0=1}^{\infty} \mathcal{L}_{\bm{Y}_n}(\bm{y}_n \mid \lambda,p) u_0^{x_0}\prod_{i=1}^{n}u_i^{y_{t_i}} \\
	\label{EquGenFnLF} & = \sum_{y_{t_n}=0}^{\infty}\cdots\sum_{y_{t_1}=0}^{\infty}\sum_{x_0=1}^{\infty} \sum_{x_{t_n}=\max\{x_0,\bm{y}_n\}}^{\infty}\cdots\sum_{x_{t_1}=\max\{x_0,\bm{y}_1\}}^{x_2} u_0^{x_0}\prod_{i=1}^{n} \eta_i(\bm{y}_n,\bm{x}_n)u_i^{y_{t_i}}\,,
\end{align}
where $\bm{u}_n:=(u_0,\ldots,u_n)$\,. 
It turns out that this is a simple rational function.

\begin{lemma}\label{LemGenFun4n2} Consider the \texttt{POPBP} with parameters $(\lambda,p)$ with $n\ge1$ observations. The generating function of the likelihood function is given by
\begin{equation}\label{eq:gf}
\phi(\bm{u}_n)  = \frac{u_0\upsilon_{0,1}\dotsm\upsilon_{n-1,n} (pu_1+q)\dotsm(pu_n+q)}{1-Q_{n}},
\end{equation}
where~$q=1-p$ and $(Q_i)$ is a family of polynomials defined by~$Q_0=(1-\upsilon_{0,1})+\upsilon_{0,1}u_0$ and
\begin{equation}\label{eq:recQ}
Q_{i}=(1-\upsilon_{i,i+1})+\upsilon_{i,i+1}(pu_i+q)Q_{i-1},\qquad i\ge0
\end{equation}
with the convention $\upsilon_{n,n+1}=1$.
\end{lemma}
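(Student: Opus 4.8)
The plan is to compute the generating function directly from its definition in \cref{EquGenFnLF} by peeling off the summations one variable at a time, starting from the innermost ones. Rather than working with the full likelihood $\mathcal{L}_{\bm{Y}_n}$ at once, I would introduce, for each $i$, the partial generating function obtained by summing over $x_0,y_{t_1},\dots,y_{t_i}$ and over $x_{t_1},\dots,x_{t_{i-1}}$, while leaving $x_{t_i}$ free; call its coefficient of $u_0^{x_0}\cdots$ (more precisely, the series in which $x_{t_i}$ still appears as a summation index weighted by some auxiliary variable) the ``state'' after step $i$. The key structural observation is that each factor $\eta_i(\bm{y}_n,\bm{x}_n)$ couples only the consecutive population sizes $x_{t_{i-1}}$ and $x_{t_i}$ together with $y_{t_i}$, so the sum has a transfer-matrix / sequential structure that is exactly what makes the generating function rational.

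The concrete steps are as follows. First, sum over each $y_{t_i}$ from $0$ to $x_{t_i}$: using the binomial theorem, $\sum_{y_{t_i}=0}^{x_{t_i}} \binom{x_{t_i}}{y_{t_i}} (pu_i)^{y_{t_i}} (1-p)^{x_{t_i}-y_{t_i}} = (pu_i+q)^{x_{t_i}}$ with $q=1-p$. This eliminates all the $y$-variables and replaces the data-dependent binomial factors by the clean weights $(pu_i+q)^{x_{t_i}}$ attached to each $x_{t_i}$. Second, handle the chain of population sizes. The remaining factor from $\eta_i$ is $\binom{x_{t_i}-1}{x_{t_{i-1}}-1}\upsilon_{i-1,i}^{x_{t_{i-1}}}(1-\upsilon_{i-1,i})^{x_{t_i}-x_{t_{i-1}}}$; summing $x_{t_i}$ from $x_{t_{i-1}}$ to $\infty$ against a weight $w^{x_{t_i}}$ gives a negative-binomial-type sum, $\sum_{x_{t_i}\ge x_{t_{i-1}}} \binom{x_{t_i}-1}{x_{t_{i-1}}-1} w^{x_{t_i}}(1-\upsilon_{i-1,i})^{x_{t_i}-x_{t_{i-1}}} = \bigl(\tfrac{w}{1-(1-\upsilon_{i-1,i})w}\bigr)^{x_{t_{i-1}}}$, valid as a formal identity. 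Iterating this from $i=n$ down to $i=1$, each elimination of $x_{t_i}$ converts a weight $w$ attached to $x_{t_i}$ into a new weight attached to $x_{t_{i-1}}$ of the form $\upsilon_{i-1,i}(pu_i+q)\cdot\frac{w}{1-(1-\upsilon_{i-1,i})w}$ — precisely the Möbius transformation encoded by the recursion \cref{eq:recQ} once one sets $w$ to be $1$ at the top (using the convention $\upsilon_{n,n+1}=1$) and tracks the accumulated numerator factors $\upsilon_{i-1,i}(pu_i+q)$. Finally, sum over $x_0$ from $1$ to $\infty$ against $u_0^{x_0}$: by then the whole expression is $u_0^{x_0}$ times $(\text{constant})\cdot(\text{something})^{x_0}$, a geometric series whose sum produces the denominator $1-Q_n$ and the overall numerator in \cref{eq:gf}.

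The cleanest way to organize the bookkeeping is an induction on $n$ (or equivalently on the recursion index $i$): define $Q_i$ by \cref{eq:recQ} and prove by induction that after eliminating $y_{t_1},\dots,y_{t_n}$ and $x_{t_n},\dots,x_{t_{i+1}}$, the partial sum equals $u_0^{x_0}\,\bigl(\prod_{j=i+1}^n \upsilon_{j-1,j}(pu_j+q)\bigr)\,\cdot\,\bigl(Q_i/(\upsilon_{0,1}u_0 \text{-part})\bigr)^{\dots}$ — the exact form of this invariant is what I would pin down first, since getting the statement of the inductive hypothesis right (in particular where the $\upsilon$ factors and the $u_0$ sit) is the main obstacle. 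Everything else is the two summation identities above plus care with ranges of summation and the degenerate top/bottom cases. I expect the convergence/formal-power-series justification to be routine (all sums are geometric or negative-binomial and the rational-function identity \cref{EquivGenFun} then holds coefficientwise), so the real work is purely the algebraic identification of the telescoping transformation with the polynomials $Q_i$.
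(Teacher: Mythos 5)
Your first step (summing out every $y_{t_i}$ with the binomial theorem to replace the observation factors by $(pu_i+q)^{x_{t_i}}$) is exactly the paper's first step. After that you diverge: you eliminate the population chain from the top down, summing $x_{t_n}$ first via the negative-binomial identity and pushing a rational weight $w\mapsto \upsilon_{i-1,i}(pu_{i-1}+q)\,w/(1-(1-\upsilon_{i-1,i})w)$ down onto $x_{t_{i-1}}$, finishing with a geometric series over $x_0$. The paper goes in the opposite direction: it sums $x_0$ first (a \emph{finite} binomial sum, since $\binom{x_{t_1}-1}{x_0-1}$ vanishes for $x_0>x_{t_1}$), obtaining $u_0\upsilon_{0,1}Q_0^{x_{t_1}-1}$, and then proves inductively that $\sum_{x_{t_i}\ge1}\tilde\eta_i Q_{i-1}^{x_{t_i}-1}=\upsilon_{i,i+1}(pu_i+q)Q_i^{x_{t_{i+1}}-1}$; only the very last sum, over $x_{t_n}$, is infinite and produces the denominator $1-Q_n$. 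The payoff of that ordering is that the intermediate objects are literally the polynomials $Q_i$ sitting in an exponent, so the recursion \cref{eq:recQ} is read off directly and no rational-function bookkeeping is needed until the final line.

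Your ordering works in principle, but the step you defer (``the exact form of this invariant is what I would pin down first'') is genuinely the crux, and your one concrete claim about it is not right as stated: the map $w\mapsto \upsilon(pu+q)w/(1-(1-\upsilon)w)$ is a M\"obius transformation, whereas \cref{eq:recQ} is an affine recursion in $Q_{i-1}$ built in the \emph{forward} direction (note also that \cref{eq:recQ} pairs $\upsilon_{i,i+1}$ with $u_i$, while your elimination of $x_{t_i}$ pairs $\upsilon_{i-1,i}$ with the factor $(pu_i+q)$ already attached to $x_{t_i}$ --- the indices are offset by one). To finish your way you would need an auxiliary induction identifying the numerator and denominator of the fully composed continued fraction $r$ (so that $r/(1-r)$ matches \cref{eq:gf}) with the products $u_0\upsilon_{0,1}\dotsm\upsilon_{n-1,n}(pu_1+q)\dotsm(pu_n+q)$ and $1-Q_n$; this is true and provable (it is the standard matrix/convergent identity for composed M\"obius maps), but it is exactly the algebra the paper's bottom-up order avoids. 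So: correct strategy, correct summation identities, but reversing the direction of elimination to match the paper's proof would save you the hardest remaining step.
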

\begin{proof}
By adopting the convention that~$\binom{n}{k}$ is~0 for $k<0$ and $k>n$, all sums in \cref{EquGenFnLF} can be taken over~$\mathbb N$ (for the variables~$y_{t_j})$ or $\mathbb N\setminus\{0\}$ (for the variables~$x_{t_j})$. 

Summation over $y_{t_i}$ using the binomial theorem reduces the generating function to
\[\sum_{x_0\ge1,x_{t_1}\ge1,\dots,x_{t_n}\ge1}(pu_n+q)^{x_{t_n}}\prod_{i=0}^{n-1}\tilde\eta_i,\]
with
\[\tilde\eta_i=\binom{x_{t_{i+1}}-1}{x_{t_{i}}-1}w_i^{x_{t_{i}}}\upsilon_{i,i+1}^{x_{t_{i}}}(1-\upsilon_{i,i+1})^{x_{t_{i+1}}-x_{t_{i}}},\quad i\ge 0,\]
where $w_0=u_0$ and $w_i=pu_i+q$ for $i\ge 1$.
By the binomial theorem,
\[\sum_{x_0\ge1}\tilde\eta_0=u_0Q_0^{x_{t_1}-1}\]
and then by induction
\[\sum_{x_{t_i}\ge1}\tilde\eta_iQ_{i-1}^{x_{t_i}-1}=\upsilon_{i,i+1}(pu_i+q)Q_i^{x_{t_{i+1}}-1},\quad i=1,\dots,n-1.\]
The final sum is 
\[\sum_{x_{t_n}\ge1}{(pu_n+q)^{x_{t_n}}Q_{n-1}^{x_{t_n}-1}}=\frac{pu_n+q}{1-Q_{n}},\]
which concludes the proof.
\end{proof}
\begin{remark}
    The important special case when~$x_0$ is fixed and equal to~1 corresponds to extracting the coefficient of~$u_0^{1}$ in this rational function. This is achieved by setting~$u_0=1$ (hence~$Q_0=1$) in \cref{eq:gf}.
\end{remark}
For notational convenience, we write~$\bar{\vect{y}}_n$ for $(x_0,\vect{y}_n)$ and if $\bar{\vect{c}}=(c_0,c_1,\dots,c_n)\in\mathbb N^{n+1}$, 
\[\mathcal{L}_{\bm{Y}_n}(\bar{\vect{y}}_n-\bar{\vect{c}}\mid \lambda,p):=\mathcal{L}_{\bm{Y}_n}(x_0-c_0, y_{t_1}-c_1,\dots,y_{t_n}-c_n \mid \lambda,p),\]
with the convention that this value is~0 if any of the entries of~$\bar{\vect y}_n-\bar{\vect c}$ is negative. We use the same notation with $\bm y_n$ and $\bm c$ in the case when~$x_0$ is fixed and equal to~1.  Also~$\bar{\vect{u}}^{\bar{\vect{c}}}$ denotes the monomial~$u_0^{c_0}\dotsm u_n^{c_n}$.

With this notation, a consequence of the explicit form of the generating function is a simple recursion for the likelihood function.
\begin{theorem}\label{TheRecFun4n2}
Consider the \texttt{POPBP} with parameters $(\lambda,p)$ with $n\ge1$ observations. The likelihood function satisfies the following recurrence equation:
\begin{equation}\label{eqn:general_structure_recursive}
 \LI{\bar{\vect{y}}_n}{\lambda,p} = \frac{1}{q_{\vect{0}}} \left( p_{\bar{\vect{y}}_n} + \sum_{ \substack{\bar{\vect{c}} \in \{0,1\}^{n+1} \\ \vect{c} \ne (0,\dots,0)}}q_{\bar{\vect{c}}}\,\LI{\bar{\vect{y}}_n-\bar{\vect{c}}}{\lambda,p} \right),
\end{equation}
where $q_{\bar{\vect{c}}}$ is the coefficient of $\bar{\vect{u}}^{\bar{\vect{c}}}$ in the polynomial~$Q_n$ of \cref{LemGenFun4n2}, while $p_{\bar{\vect{y}}_n}$ is the coefficient of $\bar{\vect{u}}^{\bar{\vect{y}}_n}$ in the numerator of \cref{eq:gf}. In the special case when~$x_0$ is fixed and equal to~1, the same result holds with~$\vect{y}_n$ and $\vect{c}$ in the place of $\bar{\vect{y}}_n$ and $\bar{\vect{c}}$.
\end{theorem}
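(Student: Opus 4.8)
The plan is to read the recurrence straight off the rational form of the generating function furnished by \cref{LemGenFun4n2}. Clearing denominators in \eqref{eq:gf} gives the polynomial identity $\phi(\bm{u}_n)\,(1-Q_n) = u_0\upsilon_{0,1}\dotsm\upsilon_{n-1,n}(pu_1+q)\dotsm(pu_n+q)$, which I would rearrange in the ring of formal power series over $\mathbb{R}$ in $u_0,\dots,u_n$ as $\phi = P + \phi\,Q_n$, where $P$ denotes the numerator in \eqref{eq:gf}. This is legitimate because $1-Q_n$ is invertible as a power series: evaluating the recursion \eqref{eq:recQ} at $u_0=\dots=u_n=0$ shows the constant term $\beta_n$ of $Q_n$ lies in $[0,1)$ (each step $\beta_i=(1-\upsilon_{i,i+1})+\upsilon_{i,i+1}\,q\,\beta_{i-1}$ is a convex combination of $1$ and a number in $[0,1)$, with $\upsilon_{i,i+1}<1$ for $i<n$ and $\upsilon_{n,n+1}=1$), so the constant term $1-\beta_n$ of $1-Q_n$ is nonzero.

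Before extracting coefficients I would record that $Q_n$ is multilinear, i.e.\ of degree at most one in each variable. This is an immediate induction on $i$ via \eqref{eq:recQ}: $Q_0=(1-\upsilon_{0,1})+\upsilon_{0,1}u_0$ is affine in $u_0$, and if $Q_{i-1}$ is multilinear in $u_0,\dots,u_{i-1}$ then $(pu_i+q)\,Q_{i-1}$, hence $Q_i$, is multilinear in $u_0,\dots,u_i$; taking $i=n$ gives the claim. Consequently the monomials of $Q_n$ are exactly the $\bar{\vect{u}}^{\bar{\vect{c}}}$ with $\bar{\vect{c}}\in\{0,1\}^{n+1}$, which is precisely why only $0$/$1$ shifts appear in \eqref{eqn:general_structure_recursive}; the numerator $P$ is a product of factors each linear in a distinct variable, so it too is multilinear and contributes the single coefficient $p_{\bar{\vect{y}}_n}$ to the monomial $\bar{\vect{u}}^{\bar{\vect{y}}_n}$.

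The recurrence then follows by comparing coefficients of $\bar{\vect{u}}^{\bar{\vect{y}}_n}$ on the two sides of $\phi=P+\phi\,Q_n$. On the left this coefficient is $\LI{\bar{\vect{y}}_n}{\lambda,p}$ by the definition of $\phi$ in \eqref{EquGenFnLF}, with the convention that it vanishes whenever a coordinate of $\bar{\vect{y}}_n$ is negative; on the right, a Cauchy product gives $p_{\bar{\vect{y}}_n}+\sum_{\bar{\vect{c}}\in\{0,1\}^{n+1}}q_{\bar{\vect{c}}}\,\LI{\bar{\vect{y}}_n-\bar{\vect{c}}}{\lambda,p}$. Splitting off the unique summand that refers back to $\LI{\bar{\vect{y}}_n}{\lambda,p}$ itself, moving it to the left, and dividing through by the resulting nonzero constant $q_{\vect{0}}$ yields \eqref{eqn:general_structure_recursive}; because $\LI{\bar{\vect{y}}_n}{\lambda,p}$ also vanishes once $x_0$ falls below $1$, the summand indexed by $(1,0,\dots,0)$ drops out when $x_0=1$, so the index set may be written as $\vect{c}\ne(0,\dots,0)$. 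For the special case $x_0=1$ one runs the same argument after specializing $u_0$ (as in the remark following \cref{LemGenFun4n2}), with $\vect{y}_n,\vect{c}$ in place of $\bar{\vect{y}}_n,\bar{\vect{c}}$; multilinearity and the coefficient comparison carry over verbatim. Everything here is routine power-series bookkeeping, and the one place that demands care is keeping the conventions consistent — which of $Q_n$ or $1-Q_n$ supplies the constants $q_{\bar{\vect{c}}}$ and $q_{\vect{0}}$ (and the attendant signs), together with the treatment of the $x_0$-coordinate and the vanishing convention for $\LI{\bar{\vect{y}}_n-\bar{\vect{c}}}{\lambda,p}$; the only substantive step is the check (via $\beta_n<1$) that the coefficient divided out is genuinely nonzero, which is what makes the recursion well-posed.
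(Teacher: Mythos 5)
Your argument is exactly the paper's: the published proof of \cref{TheRecFun4n2} is the single line ``multiply both sides of \cref{eq:gf} by $1-Q_n$ and extract coefficients,'' which is precisely your rearrangement $\phi = P + \phi\,Q_n$ followed by coefficient comparison. The additional details you supply --- multilinearity of $Q_n$ and of the numerator, invertibility of $1-Q_n$ as a power series via the constant term $\beta_n<1$, and the explicit care over whether $q_{\vect{0}}$ is read off $Q_n$ or $1-Q_n$ --- are correct elaborations of points the paper leaves implicit.
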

\begin{proof}
This is the result of multiplying both sides of \cref{eq:gf} by~$1-Q_n$ and extracting the coefficient of $\bar{\vect{u}}^{\bar{\vect{c}}}$ on both sides.
\end{proof}
\begin{remark}
The coefficients of~$Q_n$ are easily computed from the recurrence \cref{eq:recQ}. Similarly, the coefficients of the numerator of \cref{eq:gf} follow easily from its expression. Note that both polynomials have degree~1 in each of the variables~$u_0,\dots,u_n$.
\end{remark}
\begin{remark}
The recurrence \cref{eq:recQ} shows that for $p$ and the $v_{i,i+1}$ in the interval $[0,1]$, all the coefficients of the recurrence are positive, making it numerically stable. Moreover, if $p\not\in\{0,1\}$ and all~$v_{i,i+1}\neq0$ ($i\le n-1$), all these coefficients are non-zero: the recurrence has exactly~$2^{n+1}$ terms.
\end{remark}
\begin{remark} The formula \cref{eqn:general_structure_recursive} also gives the initial conditions. For instance, with $\bar{\vect{y}}_n=\vect{0}$, it gives $\LI{\bar{\vect{0}}}{\lambda,p}=p_{\bar{\vect{0}}}/{q_{\vect{0}}}$.
\end{remark}

By taking a derivative with respect to $\lambda$ from both sides of the recurrence equation for the likelihood function given in \cref{TheRecFun4n2}, one obtains a similar recurrence equation for the derivative of the likelihood function. 
This expression involves the derivative of the coefficients~$q_{\bar{\vect{c}}}$ with respect to~$\lambda$. They are the coefficients of the polynomial~$\partial Q_n/\partial\lambda$. These polynomials are computed thanks to the recurrence
\[
\frac{\partial Q_i}{\partial\lambda}=\frac{\partial v_{i,i+1}}{\partial\lambda}((pu_i+q)Q_{i-1}-1)+v_{i,i+1}(pu_i+q)\frac{\partial Q_{i-1}}{\partial\lambda},\]
which can be simplified using $v_{i,i+1}=\exp(-\lambda(t_{i+1}-t_i).$

By exploiting all these results together, we can calculate the Fisher information for the \texttt{POPBP} using \cref{EquFI4POPBP} for a given initial population size $x_0$. Note that in numerical evaluations, all infinite sums in the calculation of the Fisher information should be properly truncated.

	
\section{Experimental Methodology}\label{SecNE_Method}

In \cref{SecGF} we used generating functions to develop a recursive equation for the likelihood function of a \texttt{POPBP}. As stated in \cref{SecPOPBP}, computing and maximizing the Fisher information for a \texttt{POPBP} even for small values of $n$ and $\lambda$ can be very time consuming. Nonetheless, this section shows the results of \cref{SecGF} can significantly speed up the computation of the Fisher information and accordingly help us derive optimal experimental designs for \texttt{POPBP}s efficiently. Recall that the goal is to compute the following optimal observation times:
\begin{align*}
	(t_1^*,\dots,t_n^*)\in \mathrm{argmax}\{\mathcal{FI}_{\bm{Y}_n}(\lambda)\}\,.
\end{align*}

We use \texttt{Maple} to symbolically pre-compute the generating function for the likelihood function $\mathcal{L}_{\bm{Y}_n}(\bm{y}_n;x_0 \mid \lambda,p)$ and its derivative, which are used to compute the Fisher information.

\subsection{Parallelization}
For a vector~$\vect{c} = (c_0,\dotsc,c_n)\in\mathbb N^{n+1}$, we call $|\vect{c}| := \sum_{k=0}^{n} c_k$ its \emph{degree}. A consequence of the recurrence relation for $\mathcal{L}_{\bm{Y}_n}(\bm{y}_n;x_0 \mid \lambda,p)$ is that the recursive computation of \( \mathcal{L}_{\vect{Y}_n}(\vect{y}_n \mid \lambda, p) \) relies entirely on  values of \( \mathcal{L}_{\bm{Y}_n}(\vect{z}_n \mid \lambda, p) \) for vectors \( \vect{z}_n \) of smaller degree (and similarly for \( \frac{\partial}{\partial\lambda} \mathcal{L}_{\vect{Y}_n}(\vect{y}_n\mid\lambda,p) \)).

\begin{definition}[Slice]
	Let \( S > 0 \) be an integer. We define
	\begin{align*}
		\SL{\lambda} & := \sum_{|\vect{y}_n|=S} \frac{\left(\frac{\partial}{\partial\lambda}\LI{\vect{y}_n}{\lambda,p}\right)^2}{\LI{\vect{y}_n}{\lambda,p}}\,,
	\end{align*}
	and call it the \emph{\( S^{th} \) slice} of the computation of \( \FI{\lambda} \). We write $\WC{S}$ for the set $\{\vect{y}_n\mid |\vect{y}_n|=S\}.$
\end{definition}

Thus the Fisher information may be rewritten
\begin{align*}
\FI{\lambda} = \sum_{y_{t_1}=0}^\infty\!\dotsm\!\sum_{y_{t_n}=0}^\infty\frac{\left(\frac{\partial}{\partial\lambda}\LI{\vect{y}_n}{\lambda,p}\right)^2}{\LI{\vect{y}_n}{\lambda,p}} 
= \sum_{S=0}^\infty\sum_{|\vect{y}_n|=S} \frac{\left(\frac{\partial}{\partial\lambda}\LI{\vect{y}_n}{\lambda,p}\right)^2}{\LI{\vect{y}_n}{\lambda,p}} 
= \sum_{S=0}^\infty \SL{\lambda}.
\end{align*}

We  compute \FI{\lambda} by computing each slice in turn, starting at 0, and with the computations for each slice being independently computed in parallel.
We store the values of \( \mathcal{L} \) and \( \nicefrac{\partial\mathcal{L}}{\partial\lambda} \) from the terms of each slice until they are no longer needed.

\subsection{Implementation Considerations} 
\label{ssub:floating_point_implementation}

The above computation method for \FI{\lambda} was implemented in \texttt{C++}, and compiled to a shared library to facilitate its use with third party optimization software.
Interested readers can access the code through Github.\footnote{\url{https://github.com/matt-sk/POPBP-Fisher-Information-Calculator.git}}

This implementation includes code written to be multi-threaded so as to 
compute the terms in an individual slice in parallel.
Note that we also wrote and implemented single-threaded code, and both single-threaded and multi-threaded options are included in the implementation.
Thus the user may choose at runtime whether to take advantage of multiple processors.

The computation proceeds one slice at a time starting at \( S = 0 \) and continuing until the sum of values for a slice does not change the accumulated value.
That is, we terminate computation when
\( \sum_{S=0}^M \SL{\lambda} = \sum_{S=0}^{M+1} \SL{\lambda}, \)
at which point we consider our precision to be exhausted.

Currently, the values of \( n \) for which our implementation can compute \FI{\lambda} are fixed due to the generating functions having been pre-computed for fixed \( n \).
The coefficients of the recurrence relation from \cref{eqn:general_structure_recursive} are hard-coded in the software, although in such a way that little modification is needed to add new cases.
As of the time of writing, our implementation is capable of computing \FI{\lambda} for \( n \in \left\{ 2, 3, 4, 5 \right\} \).

We have written our implementation using \texttt{C++} templates in such a way that it should be capable of computation using any desired precision.
For the templated code to work with arbitrary precision numeric types, those numeric types must use overloaded arithmetic operators.
We have not tested it using arbitrary precision libraries; we have used only IEEE single (32 bit) and double (64 bit) precision (\texttt{C++} \texttt{float} and \texttt{double} types, respectively).
We computed the results in this article with double precision.

Our implementation takes \( t_1, \dotsc, t_n \), \( p \), and \( \lambda\), and computes the value of \FI{\lambda}.
The coefficients \( q_{\vect{c}} \) and \( p_{\vect{c}} \) depend on the values of \( t_1, \dotsc, t_n \), so our implementation begins by computing and storing these coefficients.
More precisely, our implementation computes and stores \( q_{\vect{c}}/q_{\vect{0}} \) and \( q_{\vect{c}}/q_{\vect{0}} \) so as to save a division operation in the computation of each \LI{\vect{y}_n}{\lambda,p} (and thus save many divisions over the computation of \FI{\lambda}).

Each slice is stored in an array.
To compute the value of \LI{\vect{y}_n}{\lambda,p} for a given \( \vect{y}_n \) we must be able to access arbitrary values within the earlier slices. 
To do so we must be able to index each element in the slice.
That is, we must be able to describe \emph{in code} a bijection between the integers \( \{ 0, \dotsc, \lvert \WC{S}\rvert - 1 \} \) and the elements of \WC{S}.
We have described the bijection with a recurrence relation, and computed it symbolically in \emph{Maple} for the values of \( n \in \{ 1, 2, 3, 4, 5 \} \).

We would like a more generic solution to this indexing problem (i.e., one that does not require hard coding for each new value of \(n\) for which we want to compute).
Such a solution would need to be at least as fast in implementation as our current method.
We note that an early iteration of the implementation used an associative container\footnote{ \texttt{std::unordered\_map} in \texttt{C++}; readers familiar with Python can think of this as a dictionary} as a generic solution, but this solution proved to be slower than the current implementation, presumably due to the search time inherent in the data structure.

Being able to preserve locality between elements of a slice and the required elements of the lower slices needed for their computation---whether through indexing, a clever data structure, or otherwise---would be particularly desirable.
That is, we would like to be able to reliably partition \( \WC{S} \) (roughly evenly) in such a way that each partitioned subset, as well as the subsets of \( \WC{S-1}, \dotsc, \WC{S-n} \) required for its computation, are easily extractable in contiguous memory with few unneeded extra elements.
Doing so would allow us to more easily break up the computation of a single slice over multiple computation devices (e.g., using GPUs or MPI) and---if the partitions were small enough---could also allow some more fine-grained memory caching optimizations on a single machine.

Finally, we note that although our implementation can compute for the case \( n=5 \), no results are printed in this paper for this case.
The optimization times for this case proved to be prohibitively slow.

\subsection{Optimization Method} 
\label{ssub:optimization}

To optimize \FI{\lambda} (i.e., find $(t_1^*,t_2^*,\dots,t_n^*)\in \mathrm{argmax}\{\mathcal{FI}_{\bm{Y}_n}(\lambda)\}$) for fixed values of \( \lambda \) and \( p \) we use \textit{Maple}'s \texttt{NLPSolve} function (that itself relies on numerical code from the NAG library) to perform the optimization using our \texttt{C++} implementation (accessed through the shared library) for computation of the Fisher information.

We know that \( t_1^* \le t_2^* \dotsm \le t_n^* \) and that \( t_n^* = 1 \).
Consequently, we search over the domain \( 0 \le t_1 \le \dotsm \le t_n = 1 \).
The boundaries of this domain are when \( t_1^* = \dotsm = t_k^* = 0 \) for any \( 1 \le k < n \), when \( t_i^* = t_{i+1}^* = \dotsc = t_n^* = 1 \) for any \( 1 \le k \le n \), and when \( 0 \ne t_{i} = t_{i+1} = \dotsm = t_{i_j} \ne 1 \) for some \( 1 \le i \le j \le n \).
Note that when \( n \) is large enough we may have boundaries that are unions of these types of boundary.

We optimize the interior and each boundary individually. However, we also know---because we know the population size at time $t=0$ (i.e., $x_0$) almost surely---that \( t_i^* \ne 0 \) for any \( i \), so we exclude any boundaries where \( t_i^* = 0 \).

The \texttt{NLPSolve} function offers different optimization methods, and we use two of them depending on whether the region we are optimizing over is one-dimensional, or multi-dimensional.
For one-dimensional regions (boundaries with only a single varying \( t_i \), or with a single parameter \(t\) for a single group of equal arguments \( t_i=\dotsm=t_{i+k}=t \)) we use the “\texttt{branchandbound}” method, which performs a global search.
For multi-dimensional regions (all other regions) we use the default method, which for our problem is the “\texttt{SQP}” (sequential quadratic programming) method.

\begin{definition}
	Numerical results reveal that for a fixed set of parameters, the value of $t_i^*$ is equal to 1 for small values of $p$. However, a value of \( p \) exists for which \( t_i^* \) drops suddenly from \( t_i^* = 1\). We call such a point a ``drop value'' and denote it by \DV[i]{\lambda}. Clearly, \DV[n]{\lambda} does not exist because \( t_n^* \) is always 1.
\end{definition}

The graphs of \( t_i^* \) in \cref{SecNE_Results}, are produced by first choosing a fixed \( \lambda \) and then calculating the so-called \emph{drop values} using a binary search. For every \(1 \le i < n \) we presume \( t_i^* = 1 \) when \( p = 0 \), and that \( t_i^* \ne 1 \) when \( p = 1 \) and conduct a binary search on \( p \) to bound \DV[i]{\lambda}.
We search for \DV[1]{\lambda} first, then \DV[2]{\lambda}, and so on.
However, because each optimization produces \( t_1^*, \dotsc, t_{n-1}^* \) for a particular \( p \) value, we update the upper and lower bounds of all \DV{\lambda} while we are searching for a particular one.
This approach allows the later binary searches to perhaps have narrower bounds to begin searching within.

The bounds for each \DV{\lambda} yield an open interval, \( 0 \le a_i < \DV{\lambda} < b_i \le 1 \), which is stored after computation.
The maximum width of the interval is specified at computation time; however the nature of the binary search allows the computed bounds to be a narrower interval.
All drop values for the results presented in this paper have been bounded to within an interval of width less than \( 10^{-6} \).

\begin{proposition}
	The bounding intervals overlap if and only if the bounds are identical.
\end{proposition}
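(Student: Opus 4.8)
The plan is to reduce the statement to the elementary fact that two connected components of the complement of a finite subset of $\mathbb{R}$ are either identical or disjoint; essentially all of the work lies in translating the behaviour of the binary search into that language. First I would describe the stored bounds explicitly. Let $E\subseteq[0,1]$ be the finite set consisting of the two presumed endpoints $0$ and $1$ together with every value of $p$ at which the Fisher information was optimized during any of the binary searches. Because every such optimization returns all of $t_1^*,\dots,t_{n-1}^*$ and the algorithm uses this to tighten the bounds of every drop value, the interval ultimately stored for $\DV[i]{\lambda}$ is $(a_i,b_i)$ with $a_i=\max\{p\in E : t_i^*(p)=1\}$ and $b_i=\min\{p\in E : t_i^*(p)<1\}$.

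Next I would bring in the monotone structure. Since $t_1^*(p)\le\cdots\le t_n^*(p)=1$ for every $p$, the equality $t_i^*(p)=1$ forces $t_j^*(p)=1$ for all $j\ge i$; hence $\{p : t_i^*(p)=1\}$ increases with $i$, the drop values satisfy $\DV[1]{\lambda}\le\cdots\le\DV[n-1]{\lambda}$, and $t_i^*(p)=1$ holds exactly for $p<\DV[i]{\lambda}$. (No optimization is performed exactly at a drop value, for otherwise $\DV[i]{\lambda}$ would be an endpoint of $(a_i,b_i)$, contradicting the reported strict inclusion $a_i<\DV[i]{\lambda}<b_i$.) Consequently $a_i=\max\{p\in E : p<\DV[i]{\lambda}\}$ and $b_i=\min\{p\in E : p>\DV[i]{\lambda}\}$, so $a_i,b_i\in E$ and no element of $E$ lies strictly between them; that is, $(a_i,b_i)$ is exactly the connected component of $[0,1]\setminus E$ that contains $\DV[i]{\lambda}$.

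The proposition then follows at once. The ``if'' direction is trivial, since each $(a_i,b_i)$ is a nonempty interval and hence overlaps itself. For the ``only if'' direction, suppose $(a_i,b_i)\cap(a_j,b_j)\ne\emptyset$; their union is then an open interval that contains no point of $E$ and contains both $\DV[i]{\lambda}$ and $\DV[j]{\lambda}$, so by maximality of each connected component both $(a_i,b_i)$ and $(a_j,b_j)$ equal this union and therefore coincide, giving $a_i=a_j$ and $b_i=b_j$. Alternatively, one can argue by hand: assuming $i<j$, the monotonicity above yields $a_i\le a_j$ and $b_i\le b_j$, an overlap is equivalent to $a_j<b_i$, and then $a_j\in E$ with $a_j<b_i$ forces $a_j<\DV[i]{\lambda}$, whence $a_j\le a_i$ and so $a_i=a_j$; the equality $b_i=b_j$ is symmetric.

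The main obstacle is the first part: justifying rigorously that the stored brackets are exactly the gaps of $E$ straddling the drop values. This amounts to confirming that the bookkeeping really propagates each optimization's outcome to the bounds of all drop values --- which is precisely the ``up-set'' property $t_i^*(p)=1\Rightarrow t_j^*(p)=1$ for $j\ge i$ on which the algorithm is built --- and to ruling out the measure-zero coincidence of a search point with a drop value, which is excluded a posteriori by the intervals being reported as open with their drop value strictly interior. Once those points are settled, the topological observation finishes the proof with no further computation.
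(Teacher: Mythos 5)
Your proof is correct and rests on the same underlying mechanism as the paper's, but it packages the argument in a genuinely different way. The paper argues locally: it takes a single bound \(b\) of \DV[j]{\lambda} lying in \([l_i,u_i]\), notes that the optimization at \(p=b\) would have updated the bounds of \DV[i]{\lambda} so that \(b\) must coincide with \(l_i\) or \(u_i\), and then disposes of the second bound \(b'\) by a short case analysis. You argue globally: you collect every probed value of \(p\) (together with the presumed endpoints \(0\) and \(1\)) into one finite set \(E\), characterize each stored interval as exactly the connected component of \([0,1]\setminus E\) containing its drop value, and conclude from the fact that distinct components of an open set are disjoint. Both routes depend on the same two algorithmic facts --- that every optimization updates the bounds of \emph{every} drop value, and that no probed \(p\) coincides with a drop value --- which you state explicitly (via the up-set property \(t_i^*(p)=1\Rightarrow t_j^*(p)=1\) for \(j\ge i\) and the a posteriori strictness \(a_i<\DV[i]{\lambda}<b_i\)) and the paper leaves implicit in phrases such as ``updated its bounds appropriately'' and ``the binary search can never compute \(l_i=u_i\).'' Your formulation buys the elimination of the casework and an argument that handles all pairs of intervals at once; the paper's buys a self-contained proof that tracks the update mechanics directly without the intermediate identification of the intervals as gaps of \(E\). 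Your closing ``by hand'' variant is essentially the paper's case analysis recast with the monotonicity \(a_i\le a_j\), \(b_i\le b_j\) made explicit, which confirms the two approaches meet in the middle.
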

\begin{proof}
	Suppose \( l_i < \DV[i]{\lambda} < u_i \) and that one of the bounds, \(b\), for \DV[j]{\lambda} lies inside this range or on one of the endpoints (i.e., \( l_i \le b \le u_i \)). When performing the binary search, we would have computed \( t_i^* \) when optimizing for \( p=b \) and updated its bounds appropriately.
	If \( t_i^* = 1 \) when \( p = b \) then the lower bound \( l_i \) would have been updated to \( l_i = b \).
	Otherwise \( t_i^* < 1 \) and so the upper bound \( u_i \) would have been updated to \( u_i = b \).
	So either \( \DV[i]{\lambda} < u_i=b \), or \( b=l_i < \DV[i]{\lambda} \).
	Consider now the other bound, \( b^\prime \) of \DV[j]{\lambda}.
	It cannot be the case that \( b^\prime < \DV[j]{\lambda} < b=l_i < \DV[j]{\lambda} < u_i \) nor can it be that \( \DV[j]{\lambda} < b=l_i < \DV[j]{\lambda} \), because in these cases the open bounding intervals do not overlap.
	So it must be the case that either \( l_i < b^\prime < u_i \), or \( l_i=b < u_i \le b^\prime \), or \( b^\prime \le l_i < b=u_i \).
	In all cases, applying the above analysis, and using the fact that the binary search can never compute \( l_i = u_i \) nor \( b = b^\prime \), we conclude that either \( b^\prime = l_i\) and \( b=u_i \), or \(b=l_i\) and \(b^\prime = u_i \).
\end{proof}

Once we have bounded all \DV{\lambda} we produce the optimization graph in parts.
Each part is the interval between the upper bound \DV[i]{\lambda} to the lower bound of \DV[i+1]{\lambda} (inclusive).
We do not need to compute \( t_i^* \) for any value of \( p \) less than \DV[i]{\lambda} because they are always \(1\).

For each part we use \textit{Maple}'s plot function to plot \( t_i^* \) for \( p \) in the interval.
It is a consequence of the plotting that the values of \( t_i^* \) are calculated for values of \( p \) within the interval, chosen by the plot function.
We ensure these values of \( p \) are chosen so that they are at most 0.005 apart, and that at least three are in the interval.
Note that the \( t_i^* \) are usually evaluated for significantly more values of \( p \) than the minimum needed to fulfill this requirement, because the plot function employs an adaptive algorithm whereby it may choose additional values of \( p \) so as to produce a smoother plot.
When all the parts are plotted, we overlay them together on a single pair of axes to produce the graph.

We note that our implementation described in \cref{ssub:floating_point_implementation} does not work when \( p=1 \).
However, we observe that this case is precisely the case of a \texttt{PBP}.
So values of \( t_i^* \) for \( p=1 \) are separately calculated using \cref{EquFIofPBP} and are manually appended to the plot data to ensure the plot never attempts to evaluate \( p=1\) using our \texttt{C++} implementation.

Recall that the approximation (\cref{ApxOptFIofPBP}) to optimal \( t_i^* \) for \cref{EquFIofPBP} given at the end of \cref{SecOED} is an asymptotic result.
We visualize the difference between a directly optimized \cref{EquFIofPBP} and the Becker-Kersting approximation (\cref{ApxOptFIofPBP}) in \cref{fig:ApxOptFIofPBP_FI}.

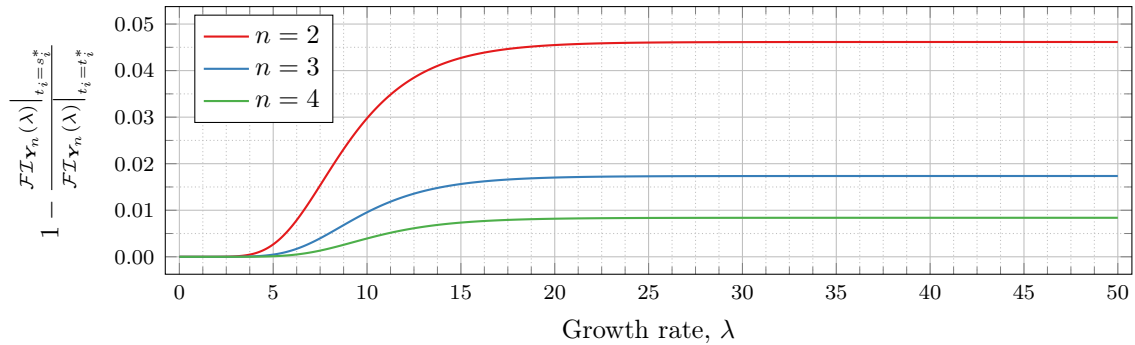
\begin{figure}[htbp]
	\centering
	\begin{tikzpicture}
	  \begin{axis}[
	  	ti*plot,
	  	ymin = 0, ymax = 0.05, ytick distance = 0.01, minor y tick num = 1, enlarge y limits = {abs = 0.00375},
	  	xmin = 0, xmax = 50,  xtick distance = 5,   minor x tick num = 3, enlarge x limits = {abs = 0.75},
		y tick label style = {
			/pgf/number format/.cd,
			fixed,
			fixed zerofill,
			precision=2,
			/tikz/.cd
		},
		scaled y ticks = false,
		xlabel = {Growth rate, \(\lambda\)},
		ylabel = {\(1 - \frac{\FI{\lambda}\bigr\rvert_{t_i = s_i^*}}{\FI{\lambda}\bigr\rvert_{t_i = t_i^*}}\)},
		legend entries = {\(n=2\),\(n=3\),\(n=4\),},
		legend pos = north west,
	  ]
		  \addplot table [x=lambda, y=FI*] {n=2/plotData,Becker_and_Kersting_Comparison.csv};
		  \addplot table [x=lambda, y=FI*] {n=3/plotData,Becker_and_Kersting_Comparison.csv};
		  \addplot table [x=lambda, y=FI*] {n=4/plotData,Becker_and_Kersting_Comparison.csv};
	  \end{axis}
	\end{tikzpicture}
	\caption{Fisher information comparison of Becker \& Kersting's approximate \(t_i^*\) and direct numeric optimization (\(s_i^*\))}
	\label{fig:ApxOptFIofPBP_FI}
\end{figure}

To see the difference we directly numerically optimized \cref{EquFIofPBP} and compared the Fisher information for those observation times with the Fisher information for the optimal observation times given by Becker and Kersting's approximation.
We denote the directly numerically optimized observation times by \( s_i^* \), and the Becker and Kersting approximations by \( t_i^* \).
To constrain the size of the difference in the Fisher information we looked at the ratio of the two values. More specifically, we looked 1 minus that ratio.
Thus we achieved a clear visual indication in the graph: if the numerically optimized observation times produce a larger Fisher information (and thus are a more optimal set of observation times) then the value is positive. Similarly the value is negative if the Becker and Kersting approximations are more optimal observation times.
The result is plotted in \cref{fig:ApxOptFIofPBP_FI} wherein we see that the difference is as much as 5\% when \( n=2 \) decreasing as \( n \) increases to slightly under 1\% when \( n = 4 \).

If we restrict our attention to only the ranges of \( \lambda \) reported in our results (\( 0 \le \lambda \le 5 \) for \( n=2 \), \( 0 \le \lambda \le 4 \) for \( n=3 \) and \( 0 \le \lambda \le 1 \) for \( n=4 \)), the discrepancy is much less pronounced.
The results are shown in \cref{fig:ApxOptFIofPBP_FI_and_ti*}, in which we also look at the difference between \(s_i^*\) and \(t_i^*\).
We see variance in the optimized parameters reach between 0.01 and 0.02 (which could affect the resulting parameter in the second or third decimal places).
We note that although this difference is small, it was nonetheless large enough that if we used the approximation to compute \( t_i^* \) for \( p=1 \), we saw a noticeable “kink” in the graphs of optimal \( t_i^* \) where the values that were numerically optimized from the \texttt{POPBP} Fisher information as \( p \to 1 \) met the value given by the approximation at \( p = 1 \).

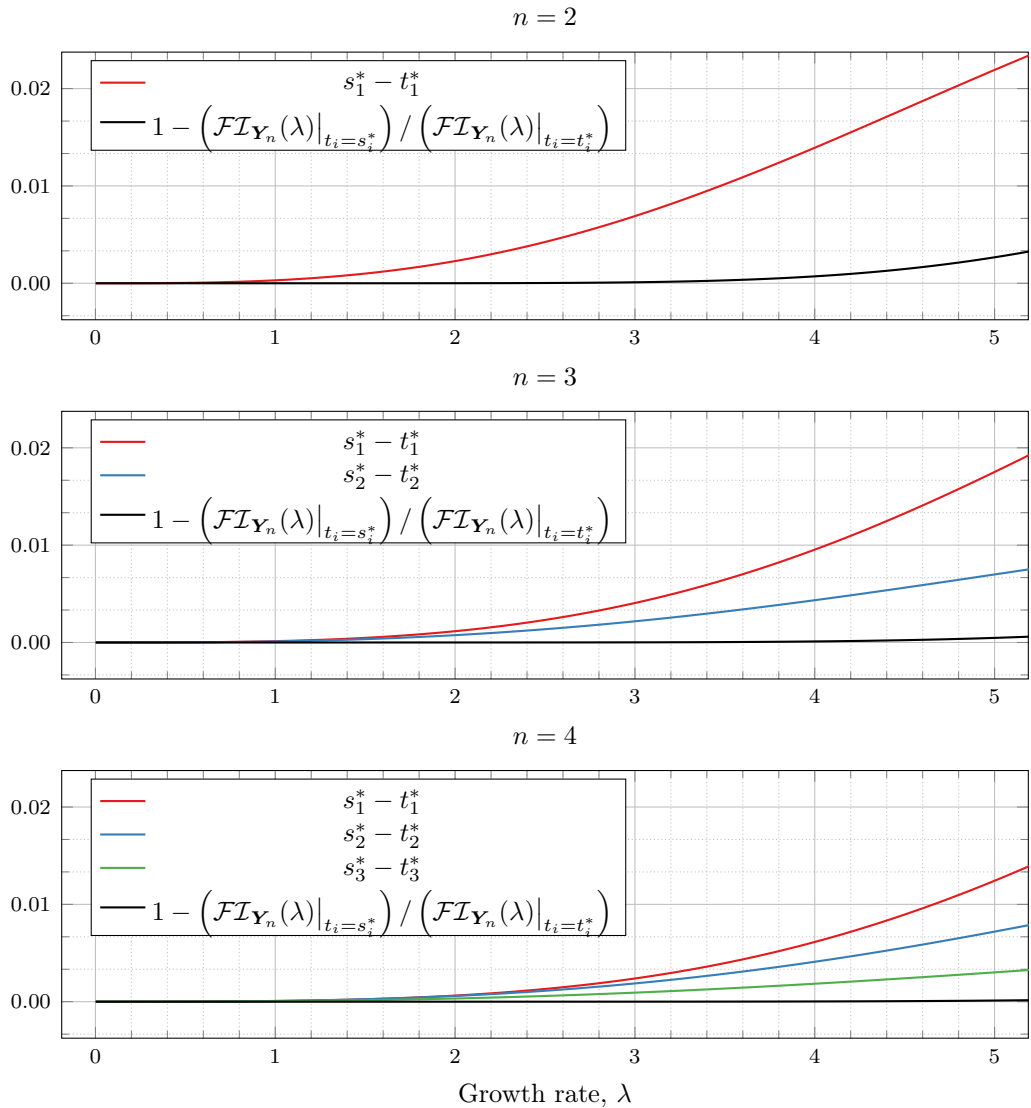
\begin{figure}[htbp]
	\centering
	\begin{tikzpicture}
	  \begin{axis}[
	  	ti*plot,
	  	ymin = 0, ymax = 0.02, ytick distance = 0.01, minor y tick num = 2, enlarge y limits = {abs = 0.00375},
	  	xmin = 0, xmax = 5,  xtick distance = 1,   minor x tick num = 4, enlarge x limits = {abs = 0.1875},
		y tick label style = {
			/pgf/number format/.cd,
			fixed,
			fixed zerofill,
			precision=2,
			/tikz/.cd
		},
		scaled y ticks = false,
		xlabel = {},
		ylabel = {},
		title = { \(n=2\) },
		legend entries = {\( s_1^* - t_1^* \), \(1 - \left(\FI{\lambda}\bigr\rvert_{t_i = s_i^*}\right) / \left(\FI{\lambda}\bigr\rvert_{t_i = t_i^*}\right)\)},
		legend pos = north west,
	  ]
		  \addplot table [x=lambda, y=t1*] {n=2/plotData,Becker_and_Kersting_Comparison.csv};
		  \addplot [black] table [x=lambda, y=FI*] {n=2/plotData,Becker_and_Kersting_Comparison.csv};
	  \end{axis}
	\end{tikzpicture}

	\begin{tikzpicture}
		\begin{axis}[
			ti*plot,
			ymin = 0, ymax = 0.02, ytick distance = 0.01, minor y tick num = 2, enlarge y limits = {abs = 0.00375},
			xmin = 0, xmax = 5,  xtick distance = 1,   minor x tick num = 4, enlarge x limits = {abs = 0.1875},
			y tick label style = {
			/pgf/number format/.cd,
			fixed,
			fixed zerofill,
			precision=2,
			/tikz/.cd
			},
			scaled y ticks = false,
			xlabel = {},
			ylabel = {},
			title = { \(n=3\) },
			legend entries = {\( s_1^* - t_1^* \), \( s_2^* - t_2^* \), \(1 - \left(\FI{\lambda}\bigr\rvert_{t_i = s_i^*}\right) / \left(\FI{\lambda}\bigr\rvert_{t_i = t_i^*}\right)\)},
			legend pos = north west,
		]
		  \addplot table [x=lambda, y=t1*] {n=3/plotData,Becker_and_Kersting_Comparison.csv};
		  \addplot table [x=lambda, y=t2*] {n=3/plotData,Becker_and_Kersting_Comparison.csv};
		  \addplot [black] table [x=lambda, y=FI*] {n=3/plotData,Becker_and_Kersting_Comparison.csv};
		\end{axis}
	\end{tikzpicture}

	\begin{tikzpicture}
		\begin{axis}[
			ti*plot,
			ymin = 0, ymax = 0.02, ytick distance = 0.01, minor y tick num = 2, enlarge y limits = {abs = 0.00375},
			xmin = 0, xmax = 5,  xtick distance = 1,   minor x tick num = 4, enlarge x limits = {abs = 0.1875},
			y tick label style = {
			/pgf/number format/.cd,
			fixed,
			fixed zerofill,
			precision=2,
			/tikz/.cd
			},
			scaled y ticks = false,
			xlabel = {Growth rate, \(\lambda\)},
			ylabel = {},
			title = { \(n=4\) },
			legend entries = {\( s_1^* - t_1^* \), \( s_2^* - t_2^* \), \( s_3^* - t_3^* \), \(1 - \left(\FI{\lambda}\bigr\rvert_{t_i = s_i^*}\right) / \left(\FI{\lambda}\bigr\rvert_{t_i = t_i^*}\right)\)},
			legend pos = north west,
		]
		  \addplot table [x=lambda, y=t1*] {n=4/plotData,Becker_and_Kersting_Comparison.csv};
		  \addplot table [x=lambda, y=t2*] {n=4/plotData,Becker_and_Kersting_Comparison.csv};
		  \addplot table [x=lambda, y=t3*] {n=4/plotData,Becker_and_Kersting_Comparison.csv};
		  \addplot [black] table [x=lambda, y=FI*] {n=4/plotData,Becker_and_Kersting_Comparison.csv};
		\end{axis}
	\end{tikzpicture}
	\caption{Fisher information and calculated optimal observation time comparison of Becker \& Kersting's approximate \(t_i^*\) and direct numeric optimization (\(s_i^*\))}
	\label{fig:ApxOptFIofPBP_FI_and_ti*}
\end{figure}

Ultimately, optimizing \cref{EquFIofPBP} directly is more accurate than using the Becker and Kersting approximation (i.e., \cref{ApxOptFIofPBP}).
Consequently, we adopted that strategy for our computations.

\section{Experimental Results} 
\label{SecNE_Results}

In this section we present the results of our computations.
The presented results, along with the tools for producing them (predominantly \textit{Maple} scripts, and some bash shell scripts) can be found in a Github repository.\footnote{\url{https://github.com/matt-sk/POPBP-Fisher-Information-Optimisation.git}}
Note that this repository is different from the repository for our \texttt{C++} implementation, described above, of the Fisher information calculation.
The results repository includes the \texttt{C++} implementation repository as a submodule.

\subsection{Optimal Observation Times}
Recall that $t_i^*$ are the optimal observation times that maximize the Fisher information, $p$ is the probability of an individual from the population being observed at any given time, $n$ is the number of observation times, and $\lambda$ is the growth rate of the population.

{\paragraph{Two Observations (\(n=2\))} 
	\label{ssub:_n_2}
	\Cref{Fign2lambda.5,Fign2lambda.8,Fign2lambda1,Fign2lambda2,Fign2lambda3,Fign2lambda4,Fign2lambda5} show the values of $t_i^*$ as $p$ varies in the case of $n=2$ observation times.
	They cover the cases of $\lambda=0.5, 0.8, 1, 2, 3, 4,$ and $5$, respectively.
	Recall that \( t_n^* = 1 \) always, so the only $t_i^*$ in these figures shown is $t_1^*$.

	\begin{figure}[htbp]
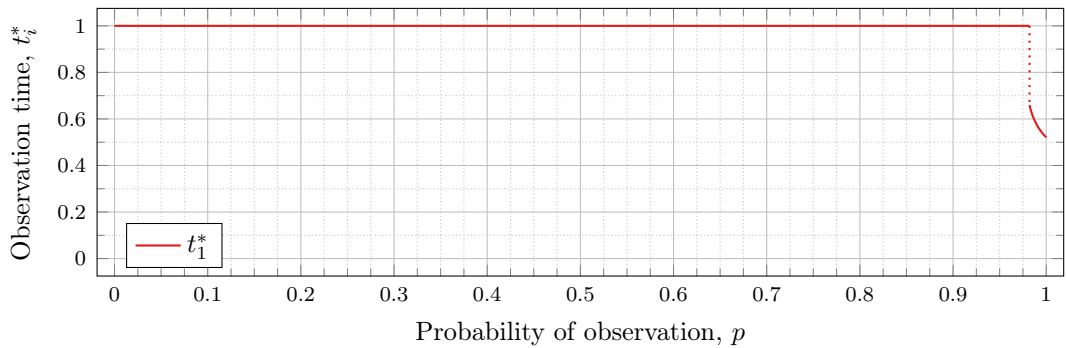

		\centering
		\begin{center}
			\tiStarPlot{n=2/plotData,lambda=0.5.csv}
			\caption{Optimal observation times for $n=2,\lambda=0.5$}
			\label{Fign2lambda.5}
		\end{center}
	\end{figure}

	When \( \lambda = 0.5 \) our calculations tell us that the drop value \( \DV[1]{0.5} \approx 0.98209 \) (more precisely, the calculated bound was \( 0.982095718383789 < \DV[1]{0.5} < 0.982096672058105 \)).
	Looking at \cref{Fign2lambda.5}, the motivation for the name ``drop value'' should be apparent; we see that at \( p \approx 0.98209\) the graph suddenly drops from \( t_1^* = 1 \).
	We indicate this drop with a dotted vertical line.

	To understand why this drop happens, recall that lower values of $p$ mean the probability of individuals being observed at time $t_1$ is lower. To obtain a better estimator of $\lambda$, we want to be able to observe more individuals to more accurately gauge how much the population has grown (from $x_0=1$ individual initially). Waiting the maximum amount of time by taking the first (and second) observations at time 1 increases the expected number of individuals observed.

	However, a point is reached where the information obtained from taking the first observation earlier outweighs the information obtained from taking the first observation at time \( t_1=1\).
	This moment occurs precisely at the drop point (i.e., when \(p=\DV[1]{\lambda}\)).
	Rather than decreasing smoothly from 1, $t_1^*$ drops instantly.

	\begin{figure}[htbp]
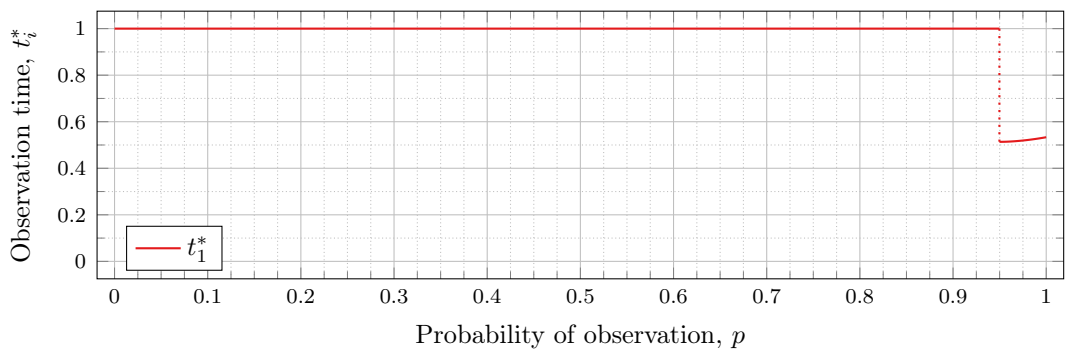

		\centering
		\begin{center}
			\tiStarPlot{n=2/plotData,lambda=0.8.csv}
			\caption{Optimal observation times for $n=2,\lambda=0.8$}
			\label{Fign2lambda.8}
		\end{center}
	\end{figure}

	Note that in \cref{Fign2lambda.5} the values of \(t_1^*\) decrease after the drop point.
	Conversely, for \(\lambda=0.8\) in Figure~\ref{Fign2lambda.8} the values increase after the drop point.
	This behavior continues for all subsequent values of \(\lambda\) (for \(n=2\)), although the curvature may vary.

	\begin{figure}[htbp]
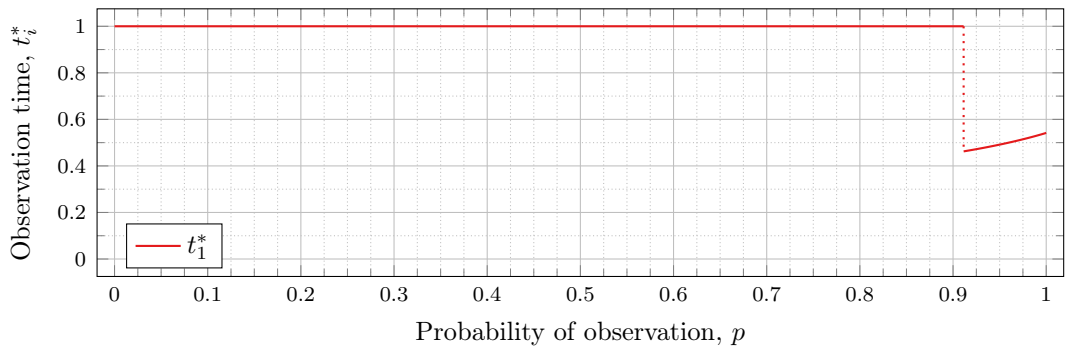

		\centering
		\begin{center}
			\tiStarPlot{n=2/plotData,lambda=1.0.csv}
			\caption{Optimal observation times for $n=2,\lambda=1$}
			\label{Fign2lambda1}
		\end{center}
	\end{figure}

	\begin{figure}[htbp]
		\centering
		\begin{center}
			\tiStarPlot{n=2/plotData,lambda=2.0.csv}
			\caption{Optimal observation times for $n=2,\lambda=2$}
			\label{Fign2lambda2}
		\end{center}
	\end{figure}

	\begin{figure}[htbp]
		\centering
		\begin{center}
			\tiStarPlot{n=2/plotData,lambda=3.0.csv}
			\caption{Optimal observation times for $n=2,\lambda=3$}
			\label{Fign2lambda3}
		\end{center}
	\end{figure}

	\begin{figure}[htbp]
		\centering
		\begin{center}
			\tiStarPlot{n=2/plotData,lambda=4.0.csv}
			\caption{Optimal observation times for $n=2,\lambda=4$}
			\label{Fign2lambda4}
		\end{center}
	\end{figure}

	\begin{figure}[htbp]
		\centering
		\begin{center}
			\tiStarPlot{n=2/plotData,lambda=5.0.csv}
			\caption{Optimal observation times for $n=2,\lambda=5$}
			\label{Fign2lambda5}
		\end{center}
	\end{figure}

	We observe that the drop values decrease as \( \lambda \) increases.
	Our computations tell us: \( \DV[1]{0.8} \approx 0.94976\), \(\DV[1]{1} \approx 0.91136\), \(\DV[1]{2} \approx 0.57116\), \(\DV[1]{3} \approx 0.26959\), \(\DV[1]{4} \approx 0.11468\), and \(\DV[1]{5} \approx  0.048029 \).
	We see drops in the graphs of \cref{Fign2lambda.8,Fign2lambda1,Fign2lambda2,Fign2lambda3,Fign2lambda4,Fign2lambda5} at values of \( p \) corresponding to these values.
	We see this pattern of decreasing drop values continues when \( n=3 \) and \( n=4 \).

	To visualize the behavior of the change in \DV[1]{\lambda}, we plot them against \( \lambda \) in \cref{fig:n=2-dropvalues}.
	We see that the decrease is not linear.

	\begin{figure}[htbp]
		\centering
		\begin{center}
			\begin{tikzpicture}
				\begin{axis}[
					ti*plot,
					xmin = 0, xmax = 6,  xtick distance = 0.5, minor x tick num = 2, enlarge x limits = {abs = 0.1 },
					xlabel = {Growth rate, \(\lambda\)},
					ylabel = {Drop Value \DV{\lambda}},
					legend entries = { \DV[1]{\lambda}, \DV[2]{\lambda} },
					legend pos = south west,
				]
					\addplot table [x=lambda, y=min D1, forget plot] {n=2/plotData,dropValues.csv};
					\addplot table [x=lambda, y=max D1] {n=2/plotData,dropValues.csv};
				\end{axis}
			\end{tikzpicture}

			\caption{Change in drop values for \( n=2 \) as \( \lambda \) varies.}
			\label{fig:n=2-dropvalues}
		\end{center}
	\end{figure}

	Recall that our computations bound the drop value.
	To speed up the computation time, we computed for each \( \lambda \) until the upper and lower bounds for \DV[1]{\lambda} were less than \( 10^{-3} \) apart (instead of \( 10^{-6} \) as used in our other drop-value calculations).
	Two hundred values of \( \lambda \) were used to generate the graph.
	Note that we have plotted both the upper and lower bounds in \cref{fig:n=2-dropvalues}; however, the bounds are sufficiently close that the difference is unable to be discerned in the graph.

	This behavior can be explained by recalling that higher $\lambda$ means a higher population growth rate, so the population grows relatively larger (compared with a population with a lower growth rate) over time.
	This growth results in a lower probability $p$ required to obtain a satisfactory expected number of observed individuals at time $t_1^*$ (i.e., a lower drop value).

	We see, in \cref{fig:FI*-3d}, the Fisher information plotted against \( t_1 \) and \( p \), for the case of \( n=2\) observations and the growth rates of \( \lambda =1,2,3 \). We have additionally overlaid the optimal Fisher information onto the graph (in a thick black line on each surface).
	If we observe each plot looking down (observing only the \( t_1^* \)--\( p \)~plane) we see in the black line (the optimal Fisher information) precisely the shape of the curves in \cref{Fign2lambda1,Fign2lambda2,Fign2lambda3}, respectively.
	If we observe each plot looking only at the \( \mathit{FI}^* \)--\( p \)~plane we see in the black line (the optimal Fisher information) precisely the shape of the optimal Fisher information as seen in \cref{fig:opt-fisher-lambda=1,fig:opt-fisher-lambda=2,fig:opt-fisher-lambda=3} in \cref{ssub:optimal_fisher_information} (specifically, only red plots of those figures).

	\begin{figure}[htbp]
		\centering
		\subfloat[\(\lambda=1\)] { \includegraphics[width=0.3\textwidth]{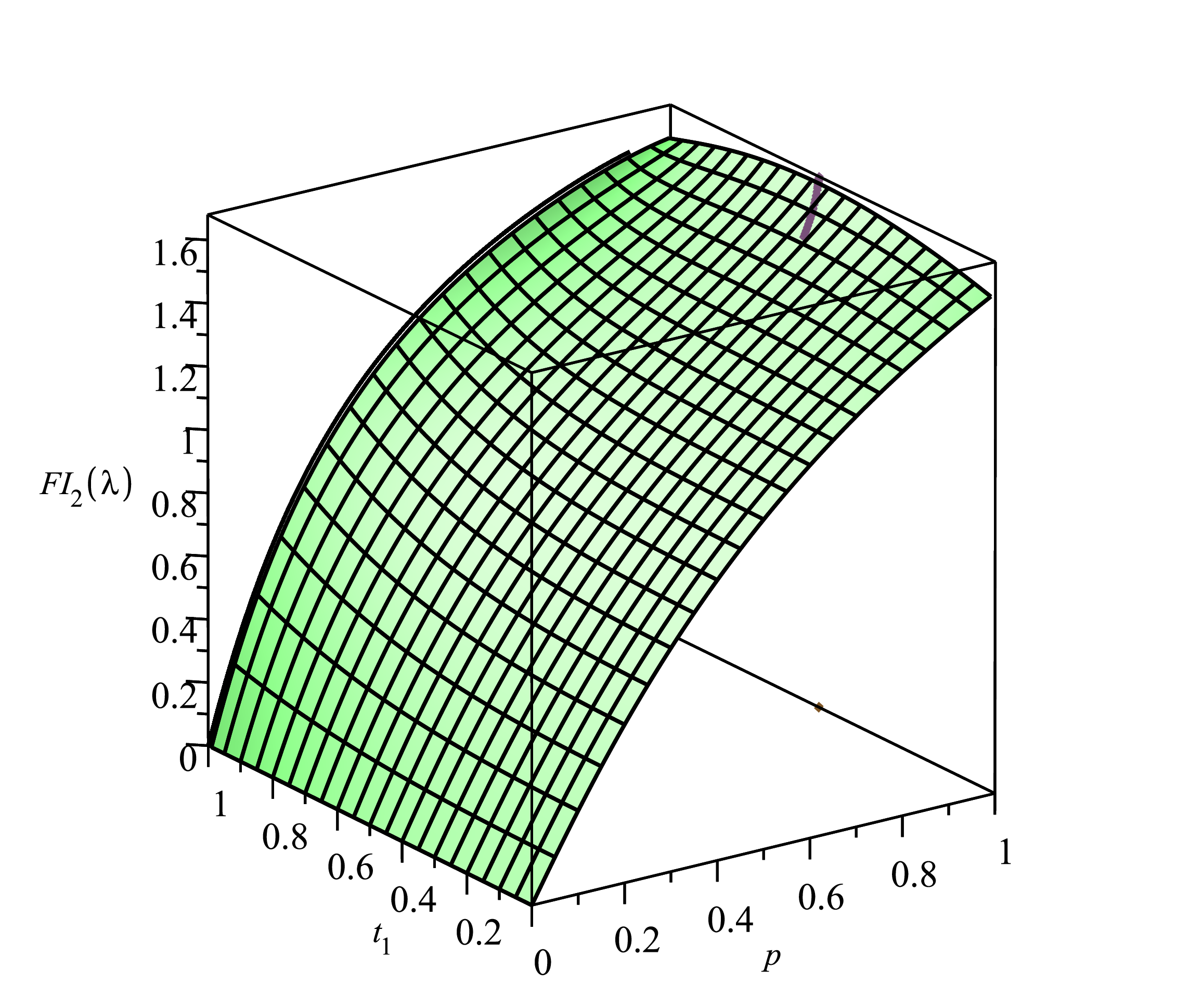} }
		\subfloat[\(\lambda=2\)] { \includegraphics[width=0.3\textwidth]{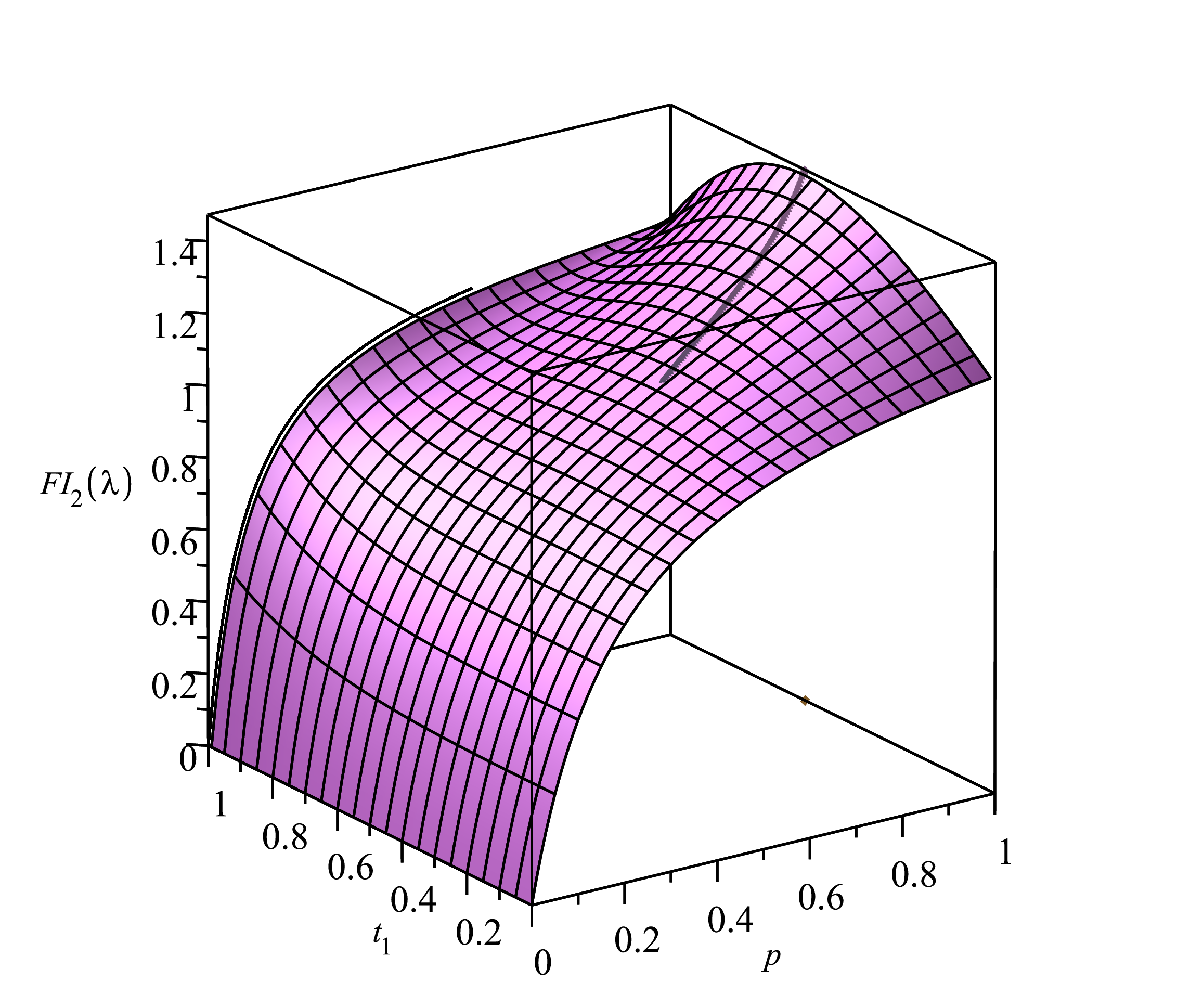} }
		\subfloat[\(\lambda=3\)] { \includegraphics[width=0.3\textwidth]{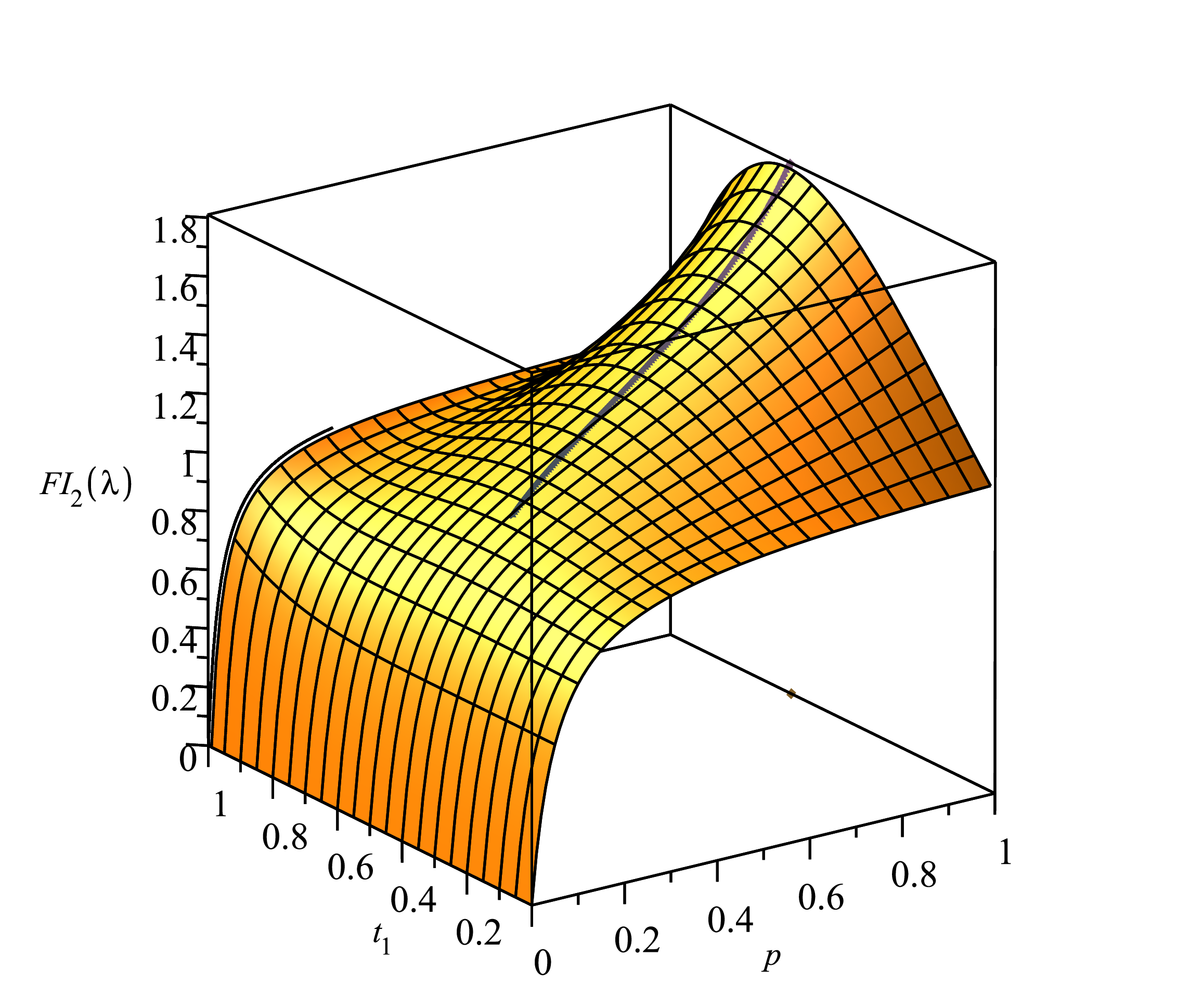} }
		\caption{Optimal Fisher information for \(n=2\) plotted against \(t_1\) and \(p\)}
		\label{fig:FI*-3d}
	\end{figure}

	Recall that \citet{Bean2} could not calculate the Fisher information for high values of \(\lambda\) (even for the case \(n=2\)), and thus could not assess the quality of the \FIApprox[2]{\lambda} approximation.
    However, as our new approach allows us to calculate \FI[2]{\lambda} for high values of $\lambda$ more efficiently, we can use it to assess the quality of $\FIApprox[2]{\lambda}$ for finding optimal observation times for higher values of $\lambda$.
    To this end, \cref{FigApplambda0.5,FigApplambda0.8,FigApplambda1,FigApplambda2,FigApplambda3,FigApplambda4,FigApplambda5}  compare \( t_i^* \) calculated using the $\FIApprox[2]{\lambda}$ approximation with the values of \( t_i^* \) we found using the actual Fisher information.

	We compared the values of \( t_i^* \) approximation, $\FIApprox[2]{\lambda}$, with the optimized $\FI[2]{\lambda}$ in  \cref{FigApplambda0.5,FigApplambda0.8,FigApplambda1,FigApplambda2,FigApplambda3,FigApplambda4,FigApplambda5}. 
    For those figures, $t_i^*$ was calculated as described above (see \cref{ssub:optimization}).
	In particular, the graphs of \( \FI[2]{\lambda} \) are the same graphs from \cref{Fign2lambda.5,Fign2lambda.8,Fign2lambda1,Fign2lambda2,Fign2lambda3,Fign2lambda4,Fign2lambda5} onto which we have overlaid the graph for $\FIApprox[2]{\lambda}$.

	We computed the graphs of $\FIApprox[2]{\lambda}$ using the same optimization libraries in \textit{Maple} described above, but using a symbolic representation of $\FIApprox[2]{\lambda}$ instead of our \texttt{C++} Fisher information calculation implementation.
	Undefined values exist in \cref{EquAppFI2} when \(t_1 = t_2\), \(t_1=0\), \(t_2=0\), and \(t_1=t_2=0\).
	We accounted for these by pre-computing in \textit{Maple} the limits of the formula for all of these cases (i.e., as \(t_2 \to t_1 \), \(t_1 \to 0 \), \(t_2 \to 0 \), and \((t_1, t_2) \to (0,0) \)) and employed a piecewise function to choose the correct expression.

	The aforementioned piecewise function evaluates the equality of \(t_1\) and \(t_2\) by checking if \(t_1 - t_2 = 0.0\) (i.e., numeric zero instead of symbolic zero) to account for cases where the values are not precisely the same, but are sufficiently close to yield the undefined values.
	We similarly check for \(t_1\) and \(t_2\) being equal to numeric zero.

	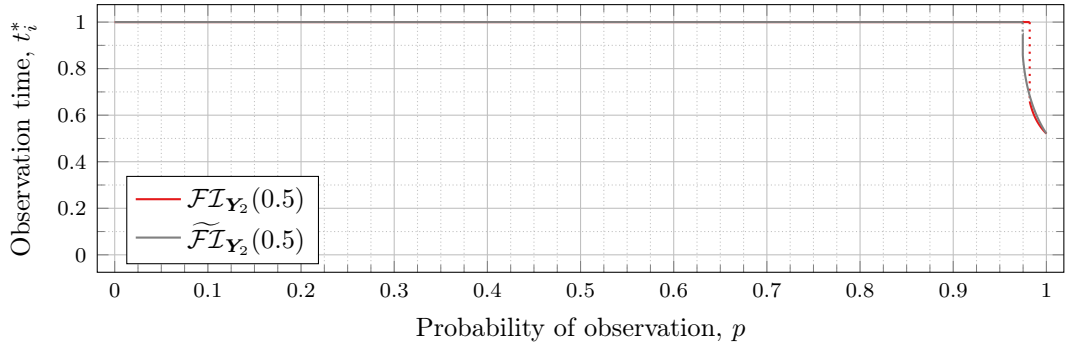
\begin{figure}[htbp]
		\centering
		\begin{center}
			\begin{tikzpicture}
			\begin{axis}[ti*plot]
			\addplot+[separators] table [y=s1] {n=2/plotData,lambda=0.5.csv};
			\addplot table [y=t1*] {n=2/plotData,lambda=0.5.csv};
			\addplot+[separators,gray] table [y=s1] {n=2/plotData,ti_star-approx,lambda=0.5.csv};
			\addplot+[color=gray] table [y=t1*] {n=2/plotData,ti_star-approx,lambda=0.5.csv};
			\legend{\(\FI[2]{0.5}\), \(\FIApprox[2]{0.5}\)}
			\end{axis}
			\end{tikzpicture}

			\caption{Optimal observation times - A comparison where $\lambda=0.5$}
			\label{FigApplambda0.5}
		\end{center}
	\end{figure}

	\begin{figure}[htbp]
		\centering
		\begin{center}
			\begin{tikzpicture}
			\begin{axis}[ti*plot]
			\addplot+[separators] table [y=s1] {n=2/plotData,lambda=0.8.csv};
			\addplot table [y=t1*] {n=2/plotData,lambda=0.8.csv};
			\addplot+[separators,gray] table [y=s1] {n=2/plotData,ti_star-approx,lambda=0.8.csv};
			\addplot+[color=gray] table [y=t1*] {n=2/plotData,ti_star-approx,lambda=0.8.csv};
			\legend{\(\FI[2]{0.8}\), \(\FIApprox[2]{0.8}\)}
			\end{axis}
			\end{tikzpicture}

			\caption{Optimal observation times - A comparison where $\lambda=0.8$}
			\label{FigApplambda0.8}
		\end{center}
	\end{figure}
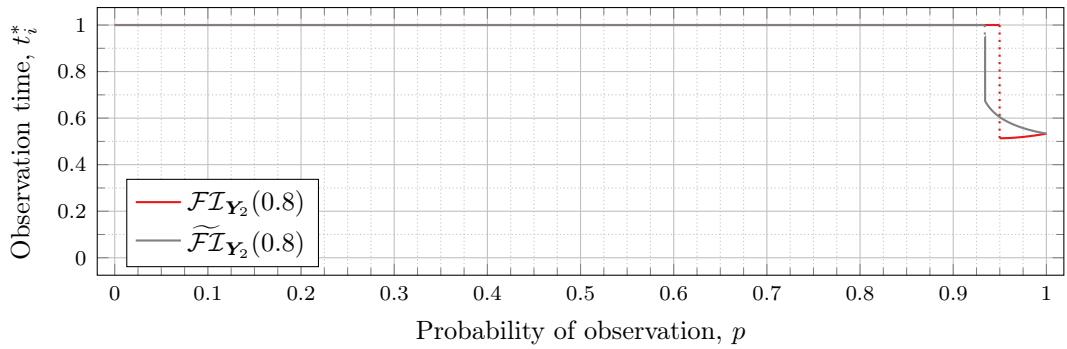

	Note that the graphs in \cref{FigApplambda0.5,FigApplambda0.8} have a vertical grey line, due to the optimization routines finding values of \( t_1^* = 0.95 \) for some \( p \) values immediately after \DV[1]{\lambda} but before a second (more significant) drop.
	The effect is more apparent visually in \cref{FigApplambda0.8}.
	The remaining graphs do not exhibit this behavior.

	In the case of \( \lambda =0.5 \) for \FIApprox[2]{0.5}, we have \( \DV[1]{0.5} \approx 0.97448 \). After the drop point, \( t_1^* \) drops to 0.95 until \( p \approx 0.97466 \) at which point it drops again to \(t_1^* \approx 0.85854\).
	Comparatively, for \FI[2]{0.5}, we have \( \DV[1]{0.5} \approx 0.98209 \) after which \( t_1^* \approx 0.65952 \).
	Thus the graph of \FIApprox[2]{0.5} doesn't drop as far as \FI[2]{0.5}; nonetheless, the curves are remarkably close.

	In the case of \( \lambda= 0.8 \) for \FIApprox[2]{0.8}, we have \( \DV[1]{0.8} \approx 0.93391 \). After the drop point, \( t_1^* \) drops to 0.95 until \( p \approx 0.93422 \) at which point it drops again to \(t_1^* \approx 0.67365 \).
	Comparatively, for \FI[2]{0.8}, we have \( \DV[1]{0.8} \approx 0.94976 \) after which \( t_1^* \approx 0.51352 \).

	\begin{figure}[htbp]
		\centering
		\begin{center}
			\begin{tikzpicture}
			\begin{axis}[ti*plot]
			\addplot+[separators] table [y=s1] {n=2/plotData,lambda=1.0.csv};
			\addplot table [y=t1*] {n=2/plotData,lambda=1.0.csv};
			\addplot+[separators,gray] table [y=s1] {n=2/plotData,ti_star-approx,lambda=1.0.csv};
			\addplot+[color=gray] table [y=t1*] {n=2/plotData,ti_star-approx,lambda=1.0.csv};
			\legend{\(\FI[2]{1}\), \(\FIApprox[2]{1}\)}
			\end{axis}
			\end{tikzpicture}

			\caption{Optimal observation times - A comparison where $\lambda=1$}
			\label{FigApplambda1}
		\end{center}
	\end{figure}

	\begin{figure}[htbp]
		\centering
		\begin{center}
			\begin{tikzpicture}
			\begin{axis}[ti*plot]
			\addplot+[separators] table [y=s1] {n=2/plotData,lambda=2.0.csv};
			\addplot table [y=t1*] {n=2/plotData,lambda=2.0.csv};
			\addplot+[separators,gray] table [y=s1] {n=2/plotData,ti_star-approx,lambda=2.0.csv};
			\addplot+[color=gray] table [y=t1*] {n=2/plotData,ti_star-approx,lambda=2.0.csv};
			\legend{\(\FI[2]{2}\), \(\FIApprox[2]{2}\)}
			\end{axis}
			\end{tikzpicture}

			\caption{Optimal observation times - A comparison where $\lambda=2$}
			\label{FigApplambda2}
		\end{center}
	\end{figure}

	\begin{figure}[htbp]
		\centering
		\begin{center}
			\begin{tikzpicture}
			\begin{axis}[ti*plot]
			\addplot+[separators] table [y=s1] {n=2/plotData,lambda=3.0.csv};
			\addplot table [y=t1*] {n=2/plotData,lambda=3.0.csv};
			\addplot+[separators,gray] table [y=s1] {n=2/plotData,ti_star-approx,lambda=3.0.csv};
			\addplot+[color=gray] table [y=t1*] {n=2/plotData,ti_star-approx,lambda=3.0.csv};
			\legend{\(\FI[2]{3}\), \(\FIApprox[2]{3}\)}
			\end{axis}
			\end{tikzpicture}

			\caption{Optimal observation times - A comparison where $\lambda=3$}
			\label{FigApplambda3}
		\end{center}
	\end{figure}

	\begin{figure}[htbp]
		\centering
		\begin{center}
			\begin{tikzpicture}
			\begin{axis}[ti*plot]
			\addplot+[separators] table [y=s1] {n=2/plotData,lambda=4.0.csv};
			\addplot table [y=t1*] {n=2/plotData,lambda=4.0.csv};
			\addplot+[separators,gray] table [y=s1] {n=2/plotData,ti_star-approx,lambda=4.0.csv};
			\addplot+[color=gray] table [y=t1*] {n=2/plotData,ti_star-approx,lambda=4.0.csv};
			\legend{\(\FI[2]{4}\), \(\FIApprox[2]{4}\)}
			\end{axis}
			\end{tikzpicture}

			\caption{Optimal observation times - A comparison where $\lambda=4$}
			\label{FigApplambda4}
		\end{center}
	\end{figure}

	\begin{figure}[htbp]
		\centering
		\begin{center}
			\begin{tikzpicture}
			\begin{axis}[ti*plot]
			\addplot+[separators] table [y=s1] {n=2/plotData,lambda=5.0.csv};
			\addplot table [y=t1*] {n=2/plotData,lambda=5.0.csv};
			\addplot+[separators,gray] table [y=s1] {n=2/plotData,ti_star-approx,lambda=5.0.csv};
			\addplot+[color=gray] table [y=t1*] {n=2/plotData,ti_star-approx,lambda=5.0.csv};
			\legend{\(\FI[2]{5}\), \(\FIApprox[2]{5}\)}
			\end{axis}
			\end{tikzpicture}

			\caption{Optimal observation times - A comparison where $\lambda=5$}
			\label{FigApplambda5}
		\end{center}
	\end{figure}

	In all cases, we see both curves approach the same $t_1^*$ when $p=1$.
	This observation is clearest when \( \lambda=0.8 \) and \( \lambda=1 \), for which the approximation $\FIApprox[2]{\lambda}$ appears to be poorest.
	The reason is that when $p=1$, the $\texttt{POPBP}(1,\lambda)$ reduces to the $\texttt{PBP}(\lambda)$, and both $\FIApprox[2]{\lambda}$ and $\mathcal{FI}(\lambda)$ coincide with the exact form of the Fisher information for a $\texttt{PBP}$ as given in \cref{EquFIofPBP}.
	We note that we did not force the computation of \cref{EquFIofPBP} for \(p=1\) when computing \FIApprox[2]{\lambda} like we did for the computation of \FI[2]{\lambda}, and yet the curves nonetheless meet.

	In all cases, we see that as $p$ increases, the two curves ($\FIApprox[2]{\lambda}$ and $\mathcal{FI}(\lambda)$) become closer together.
	\Cref{FigApplambda0.5} notwithstanding, as $\lambda$ increases the curves become closer together as well.
	Therefore, when $n=2$, we can use the approximation $\FIApprox[2]{\lambda}$ to find $t_1^*$ when $\lambda$ is large, because the calculation is faster and the approximation is quite good. When $\lambda$ is small, we can find $t_1^*$ directly, because the calculation time is shorter, and $\FIApprox[2]{\lambda}$ is a poorer approximation.


	\paragraph{Three Observations (\(n=3\))} 
	\label{ssub:_n_3}
	\Cref{Fign3lambda0.5,Fign3lambda0.8,Fign3lambda1,Fign3lambda2,Fign3lambda3,Fign3lambda4} show the values of $t_i^*$ as $p$ varies in the case of $n=3$ observation times.
	They cover the cases of $\lambda=0.5, 0.8, 1, 2, 3,$ and $4$, respectively.
	Recall that \( t_n^* = 1 \) always, so only $t_1^*$ and $t_2^*$ are shown in these figures.
 
	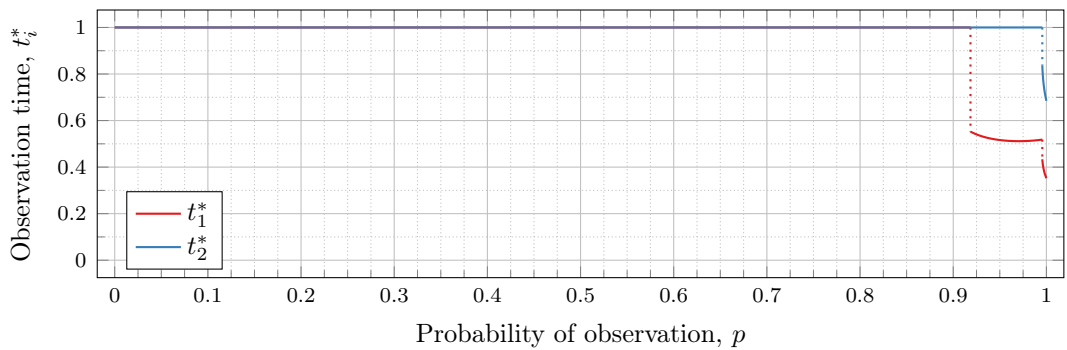
\begin{figure}[htbp]
		\centering
		\begin{center}
			\begin{tikzpicture}
			\begin{axis}[ti*plot]
			\addplot+[separators] table [y=s1] {n=3/plotData,lambda=0.5.csv};
			\addplot table [y=t1*] {n=3/plotData,lambda=0.5.csv};
			\addplot+[separators] table [y=s2] {n=3/plotData,lambda=0.5.csv};
			\addplot table [y=t2*] {n=3/plotData,lambda=0.5.csv};
			\end{axis}
			\end{tikzpicture}

			\caption{Optimal observation times for $n=3,\lambda=0.5$}
			\label{Fign3lambda0.5}
		\end{center}
	\end{figure}

	When \( \lambda = 0.5 \) our calculated drop values are \( \DV[1]{0.5} \approx 0.91852 \) and \( \DV[2]{0.5} \approx 0.99562 \).
	We see the appropriate drops on the graphs in Figure~\ref{Fign3lambda0.5}.

	Observe that the graph of \(t_1^*\) drops a second time at \( p \approx \DV[2]{0.5} \).
	Recall that \DV{\lambda} is specifically defined as (and thus computed to be) the value for the \emph{first} time \( t_i^* \) drops.
	In particular \DV[2]{0.5} is first time that \(t_2^*\) drops, and none of the computations for its bounds took \( t_1^*\) into account at all.

	Note, however, that we compute no values of \(t_i^*\) (for any \(i\)) for values of \(p\) within the bounds \DV[j]{\lambda} (for any \(j\)).
	Thus whether \(t_1^*\) drops at exactly \( p = \DV[2]{0.5} \), or at a point very close, is unclear.
	However, we note that an earlier calculation had \DV{\lambda} bounded by an interval of width less than \(10^{-12}\), and we observed the same phenomenon in the graph of \FI[3]{0.5} calculated at that time.
	We abandoned and recomputed the data because we had not taken timing information for the calculations, and, importantly, the timing information reported in this paper is for the computations used to produce the results reported herein.
	When recomputing, we decided that bounding \DV{\lambda} to an interval less than \(10^{-12}\) was overkill, and we opted to use \(10^{-6}\) instead.

	We see this behavior consistently in all our graphs for \(n=3\) and \(n=4\).
	In general, \( t_i^* \) for all \( i \le j \) suddenly drop in value at \( p \approx \DV[j]{\lambda} \).

	\begin{figure}[htbp]
		\centering
		\begin{center}
			\begin{tikzpicture}
			\begin{axis}[ti*plot]
			\addplot+[separators] table [y=s1] {n=3/plotData,lambda=0.8.csv};
			\addplot table [y=t1*] {n=3/plotData,lambda=0.8.csv};
			\addplot+[separators] table [y=s2] {n=3/plotData,lambda=0.8.csv};
			\addplot table [y=t2*] {n=3/plotData,lambda=0.8.csv};
			\end{axis}
			\end{tikzpicture}
			\caption{Optimal observation times for $n=3,\lambda=0.8$}
			\label{Fign3lambda0.8}
		\end{center}
	\end{figure}
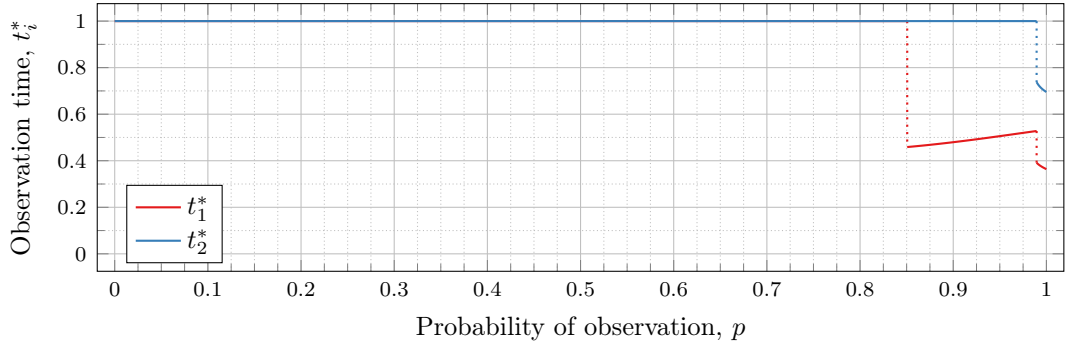

	\begin{figure}[htbp]
		\centering
		\begin{center}
			\begin{tikzpicture}
			\begin{axis}[ti*plot]
			\addplot+[separators] table [y=s1] {n=3/plotData,lambda=1.0.csv};
			\addplot table [y=t1*] {n=3/plotData,lambda=1.0.csv};
			\addplot+[separators] table [y=s2] {n=3/plotData,lambda=1.0.csv};
			\addplot table [y=t2*] {n=3/plotData,lambda=1.0.csv};
			\end{axis}
			\end{tikzpicture}
			\caption{Optimal observation times for $n=3,\lambda=1$}
			\label{Fign3lambda1}
		\end{center}
	\end{figure}

	\begin{figure}[htbp]
		\centering
		\begin{center}
			\begin{tikzpicture}
			\begin{axis}[ti*plot]
			\addplot+[separators] table [y=s1] {n=3/plotData,lambda=2.0.csv};
			\addplot table [y=t1*] {n=3/plotData,lambda=2.0.csv};
			\addplot+[separators] table [y=s2] {n=3/plotData,lambda=2.0.csv};
			\addplot table [y=t2*] {n=3/plotData,lambda=2.0.csv};
			\end{axis}
			\end{tikzpicture}
			\caption{Optimal observation times for $n=3,\lambda=2$}
			\label{Fign3lambda2}
		\end{center}
	\end{figure}

	\begin{figure}[htbp]
		\centering
		\begin{center}
			\begin{tikzpicture}
			\begin{axis}[ti*plot]
			\addplot+[separators] table [y=s1] {n=3/plotData,lambda=3.0.csv};
			\addplot table [y=t1*] {n=3/plotData,lambda=3.0.csv};
			\addplot+[separators] table [y=s2] {n=3/plotData,lambda=3.0.csv};
			\addplot table [y=t2*] {n=3/plotData,lambda=3.0.csv};
			\end{axis}
			\end{tikzpicture}
			\caption{Optimal observation times for $n=3,\lambda=3$}
			\label{Fign3lambda3}
		\end{center}
	\end{figure}

	\begin{figure}[htbp]
		\centering
		\begin{center}
			\begin{tikzpicture}
			\begin{axis}[
			ti*plot,
			restrict x to domain=0.42:0.44, 
			xtick distance = {0.001}, minor x tick num = {4},
			ytick distance = {0.001}, minor y tick num = {4},
			xmin=0.425, xmax=0.428, enlarge x limits = {abs=0.00015}, 
			ymin=0.527, ymax=0.529, enlarge y limits = {abs=0.00015}, 
			width=16.5\minorgridsize,height=11.5\minorgridsize,
			/pgf/number format/precision=3
			]
			\addplot+[separators,restrict y to domain*=0.5268:0.5292] table [y=s1] {n=3/plotData,lambda=3.0.csv};
			\addplot table [y=t1*] {n=3/plotData,lambda=3.0.csv};
			\addplot+[separators,restrict y to domain*=0.5268:0.5292] table [y=s2] {n=3/plotData,lambda=3.0.csv};
			\addplot+[restrict y to domain*=0.5268:0.5292] table [y=t2*] {n=3/plotData,lambda=3.0.csv};
			\end{axis}
			\end{tikzpicture}
			\caption{Optimal observation times for $n=3,\lambda=3$ showing the curves do not touch}
			\label{Fign3lambda3zoom}
		\end{center}
	\end{figure}
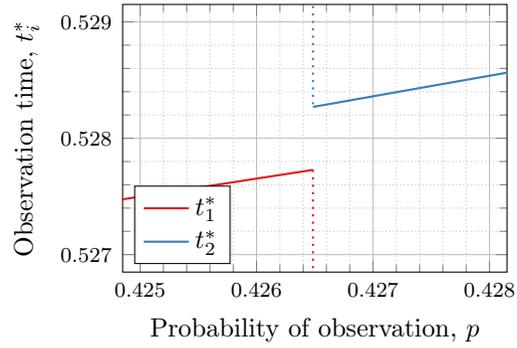
  
	\begin{figure}[htbp]
		\centering
		\begin{center}
			\begin{tikzpicture}
			\begin{axis}[ti*plot]
			\addplot+[separators] table [y=s1] {n=3/plotData,lambda=4.0.csv};
			\addplot table [y=t1*] {n=3/plotData,lambda=4.0.csv};
			\addplot+[separators] table [y=s2] {n=3/plotData,lambda=4.0.csv};
			\addplot table [y=t2*] {n=3/plotData,lambda=4.0.csv};
			\end{axis}
			\end{tikzpicture}
			\caption{Optimal observation times for $n=3,\lambda=4$}
			\label{Fign3lambda4}
		\end{center}
	\end{figure}

	When \( \lambda=3 \) the end of curve for \(t_1^*\) before \DV[2]{3} appears to meet the beginning of the curve of \(t_2\) after \DV[2]{3}.
	Looking more closely at this region (see Figure~\ref{Fign3lambda3zoom}) shows that although they are very close, the curves do not meet.

	In all cases, we see the same phenomena with regard to drop values that we do for the \(n=2\) case.
	As \(\lambda\) increases, both drop values (\DV[1]{\lambda} and \DV[2]{\lambda}) decrease.
	Unsurprisingly, \DV[1]{\lambda} is always less than \DV[2]{\lambda}; however, the distance between them varies as \(\lambda\) increases.

	To visualize the behavior of the change in both \DV[1]{\lambda} and \DV[2]{\lambda} we plot them against \(\lambda\) in Figure~\ref{fig:n=3,dropvalues}.
	As we did for the \(n=2\) case, we computed until the bounding intervals were less than \(10^{-3}\), and plot both upper and lower bounds in the figure for each \DV{\lambda}.

	\begin{figure}[htbp]
		\centering
		\begin{tikzpicture}
			\begin{axis}[
				ti*plot,
				xmin = 0, xmax = 4,  xtick distance = 0.5, minor x tick num = 4, enlarge x limits = {abs = 0.06},
				xlabel = {Growth rate, \(\lambda\)},
				ylabel = {Drop Value \DV{\lambda}},
				legend entries = { \DV[1]{\lambda}, \DV[2]{\lambda} },
				legend pos = south west,
			]
			  \addplot table [x=lambda, y=min D1, forget plot] {n=3/plotData,dropValues.csv};
			  \addplot table [x=lambda, y=max D1] {n=3/plotData,dropValues.csv};
			  \addplot table [x=lambda, y=min D2, forget plot] {n=3/plotData,dropValues.csv};
			  \addplot table [x=lambda, y=max D2] {n=3/plotData,dropValues.csv};
			\end{axis}
		\end{tikzpicture}
		\caption{Change in drop values for \(n = 3\) as \(\lambda\) varies.}
		\label{fig:n=3,dropvalues}
	\end{figure}
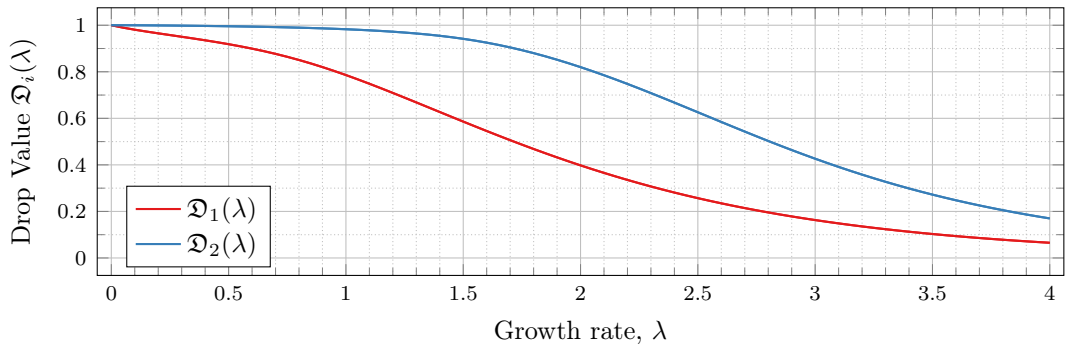

	We note that not only, as we discussed above in \cref{ssub:optimization}, have we never observed a case where the intervals for \(\DV{\lambda}\) overlap, but we also have never seen a case where the intervals \DV[i]{\lambda} and \DV[j]{\lambda} (for \(i \ne j\)) are adjacent (sharing a common endpoint of the open interval).


	\paragraph{Four Observations (\(n=4\))} 
	\label{ssub:_n_4}

	\Cref{Fign4lambda0.5,Fign4lambda0.8,Fign4lambda1} show the values of $t_i^*$ as $p$ varies in the case of $n=4$ observation times.
	They cover the cases of $\lambda=0.5, 0.8$ and $1$, respectively.
	Recall that \( t_n^* = 1 \) always, so only $t_1^*$, $t_2^*$, and $t_3^*$ are shown in these figures.
 
	\begin{figure}[htbp]
		\centering
		\begin{center}
			\begin{tikzpicture}
			\begin{axis}[ti*plot]
			\addplot+[separators] table [y=s1] {n=4/plotData,lambda=0.5.csv};
			\addplot table [y=t1*] {n=4/plotData,lambda=0.5.csv};
			\addplot+[separators] table [y=s2] {n=4/plotData,lambda=0.5.csv};
			\addplot table [y=t2*] {n=4/plotData,lambda=0.5.csv};
			\addplot+[separators] table [y=s3] {n=4/plotData,lambda=0.5.csv};
			\addplot table [y=t3*] {n=4/plotData,lambda=0.5.csv};
			\end{axis}
			\end{tikzpicture}
			\caption{Optimal observation times for $n=4,\lambda=0.5$}
			\label{Fign4lambda0.5}
		\end{center}
	\end{figure}
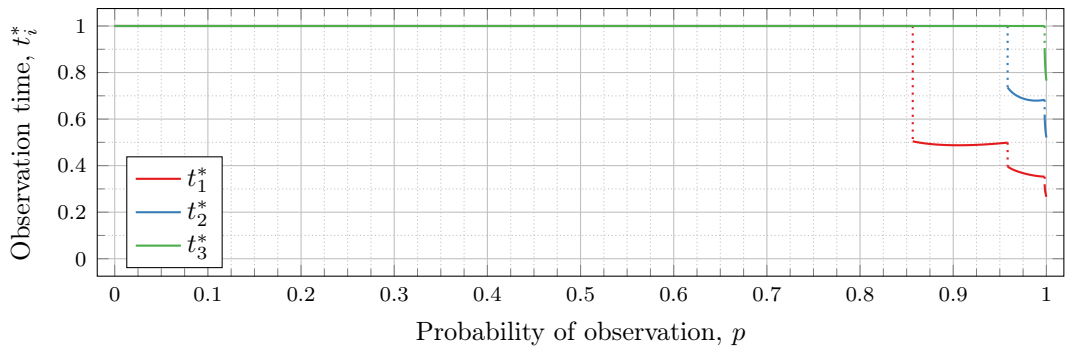

	\begin{figure}[htbp]
		\centering
		\begin{center}
			\begin{tikzpicture}
			\begin{axis}[ti*plot]
			\addplot+[separators] table [y=s1] {n=4/plotData,lambda=0.8.csv};
			\addplot table [y=t1*] {n=4/plotData,lambda=0.8.csv};
			\addplot+[separators] table [y=s2] {n=4/plotData,lambda=0.8.csv};
			\addplot table [y=t2*] {n=4/plotData,lambda=0.8.csv};
			\addplot+[separators] table [y=s3] {n=4/plotData,lambda=0.8.csv};
			\addplot table [y=t3*] {n=4/plotData,lambda=0.8.csv};
			\end{axis}
			\end{tikzpicture}
			\caption{Optimal observation times for $n=4,\lambda=0.8$}
			\label{Fign4lambda0.8}
		\end{center}
	\end{figure}

	\begin{figure}[htbp]
		\centering
		\begin{center}
			\begin{tikzpicture}
			\begin{axis}[ti*plot]
			\addplot+[separators] table [y=s1] {n=4/plotData,lambda=1.0.csv};
			\addplot table [y=t1*] {n=4/plotData,lambda=1.0.csv};
			\addplot+[separators] table [y=s2] {n=4/plotData,lambda=1.0.csv};
			\addplot table [y=t2*] {n=4/plotData,lambda=1.0.csv};
			\addplot+[separators] table [y=s3] {n=4/plotData,lambda=1.0.csv};
			\addplot table [y=t3*] {n=4/plotData,lambda=1.0.csv};
			\end{axis}
			\end{tikzpicture}
			\caption{Optimal observation times for $n=4,\lambda=1$}
			\label{Fign4lambda1}
		\end{center}
	\end{figure}

	In all cases, we see the same phenomena with regard to drop values that we did for the \(n=2\) and \(n=3\) case2.
	As \(\lambda\) increases, all drop values (\DV[1]{\lambda}, \DV[2]{\lambda}, and \DV[3]{\lambda}) decrease.
	Unsurprisingly, \DV[1]{\lambda} is always less than \DV[2]{\lambda} which in turn is always less than \DV[3]{\lambda}; however, the distance between them varies as \(\lambda\) increases.

	We did not plot \DV{\lambda} against \(\lambda\) to visualize the behavior of the change in the drop values.
	We chose not to do so because of the large time required to compute the drop values for a single \(\lambda\) (even when only computing to an interval less than \(10^{-3}\)), and the large number of values of \(\lambda\) required to produce such a plot.

	\subsection{Optimal Fisher Information} 
	\label{ssub:optimal_fisher_information}
	\Cref{fig:opt-fisher-lambda=0.5,fig:opt-fisher-lambda=0.8,fig:opt-fisher-lambda=1,fig:opt-fisher-lambda=2,fig:opt-fisher-lambda=3,fig:opt-fisher-lambda=4,fig:opt-fisher-lambda=5} show the optimal Fisher information, \(\mathcal{FI}^*\), corresponding to the optimal parameters \( t_i^*\) from \cref{ssub:_n_2,ssub:_n_3,ssub:_n_4}.

	\begin{figure}[htbp]
		\centering
		\begin{center}
			\begin{tikzpicture}
			\begin{axis}[FI*plot]
			\addplot table [y=FI*] {n=2/plotData,lambda=0.5.csv};
			\addplot table [y=FI*] {n=3/plotData,lambda=0.5.csv};
			\addplot table [y=FI*] {n=4/plotData,lambda=0.5.csv};
			\legend{\FI*[2]{0.5},\FI*[3]{0.5},\FI*[4]{0.5}}
			\end{axis}
			\end{tikzpicture}
			\caption{Optimal Fisher information for $\lambda=0.5$}
			\label{fig:opt-fisher-lambda=0.5}
		\end{center}
	\end{figure}

	\begin{figure}[htbp]
		\centering
		\begin{center}
			\begin{tikzpicture}
			\begin{axis}[FI*plot]
			\addplot table [y=FI*] {n=2/plotData,lambda=0.8.csv};
			\addplot table [y=FI*] {n=3/plotData,lambda=0.8.csv};
			\addplot table [y=FI*] {n=4/plotData,lambda=0.8.csv};
			\legend{\FI*[2]{0.8},\FI*[3]{0.8},\FI*[4]{0.8}}
			\end{axis}
			\end{tikzpicture}
			\caption{Optimal Fisher information for $\lambda=0.8$}
			\label{fig:opt-fisher-lambda=0.8}
		\end{center}
	\end{figure}

	\begin{figure}[htbp]
		\centering
		\begin{center}
			\begin{tikzpicture}
			\begin{axis}[FI*plot]
			\addplot table [y=FI*] {n=2/plotData,lambda=1.0.csv};
			\addplot table [y=FI*] {n=3/plotData,lambda=1.0.csv};
			\addplot table [y=FI*] {n=4/plotData,lambda=1.0.csv};
			\legend{\FI*[2]{1},\FI*[3]{1},\FI*[4]{1}}
			\end{axis}
			\end{tikzpicture}
			\caption{Optimal Fisher information for $\lambda=1$}
			\label{fig:opt-fisher-lambda=1}
		\end{center}
	\end{figure}

	\begin{figure}[htbp]
		\centering
		\begin{center}
			\begin{tikzpicture}
			\begin{axis}[FI*plot]
			\addplot table [y=FI*] {n=2/plotData,lambda=2.0.csv};
			\addplot table [y=FI*] {n=3/plotData,lambda=2.0.csv};
			\legend{\FI*[2]{2},\FI*[3]{2}}
			\end{axis}
			\end{tikzpicture}
			\caption{Optimal Fisher information for $\lambda=2$}
			\label{fig:opt-fisher-lambda=2}
		\end{center}
	\end{figure}

	\begin{figure}[htbp]
		\centering
		\begin{center}
			\begin{tikzpicture}
			\begin{axis}[FI*plot]
			\addplot table [y=FI*] {n=2/plotData,lambda=3.0.csv};
			\addplot table [y=FI*] {n=3/plotData,lambda=3.0.csv};
			\legend{\FI*[2]{3},\FI*[3]{3}}
			\end{axis}
			\end{tikzpicture}
			\caption{Optimal Fisher information for $\lambda=3$}
			\label{fig:opt-fisher-lambda=3}
		\end{center}
	\end{figure}

	\begin{figure}[htbp]
		\centering
		\begin{center}
			\begin{tikzpicture}
			\begin{axis}[FI*plot]
			\addplot table [y=FI*] {n=2/plotData,lambda=4.0.csv};
			\addplot table [y=FI*] {n=3/plotData,lambda=4.0.csv};
			\legend{\FI*[2]{4}, \FI*[3]{4}}
			\end{axis}
			\end{tikzpicture}
			\caption{Optimal Fisher information for $\lambda=4$}
			\label{fig:opt-fisher-lambda=4}
		\end{center}
	\end{figure}

	\begin{figure}[htbp]
		\centering
		\begin{center}
			\begin{tikzpicture}
			\begin{axis}[FI*plot]
			\addplot table [y=FI*] {n=2/plotData,lambda=5.0.csv};
			\legend{\FI*[2]{5}}
			\end{axis}
			\end{tikzpicture}
			\caption{Optimal Fisher information for $\lambda=5$}
			\label{fig:opt-fisher-lambda=5}
		\end{center}
	\end{figure}

	In all cases the graph appears to be increasing.
	We see an initial concave-down curve followed by a sudden change of curvature and direction in the graph (and an undifferentiable point at the interface between the two).
	Careful observation should indicate that this change corresponds to a drop value \DV{\lambda}.
	Indeed, the curves for \(n=2\) have only a single change, whereas the curves for \(n=3\) have two changes (although in some cases seeing the second change can be difficult), and for \(n=4\) we have three changes.

	As should be expected, for a fixed growth rate \(\lambda\) we see the optimal Fisher information increases as the number of observations, \(n\), increase.
	For small \( \lambda \) the curves appear to converge to the same value as \( p \) approaches 1; however, convergence ceases when \( \lambda \ge 2 \).
	Furthermore, for a fixed growth rate \( \lambda \) and number of observations \( n \), the optimal Fisher information increases as the probability of observation \( p \) increases.

	A surprising observation is that the maximal optimal Fisher information decreases as \(\lambda\) increases from 0.5 to 2, which can be seen by comparing the optimal Fisher information for \( p=1 \) on each graph and observing that it is decreasing.
	This observation is clearer when we draw the graphs side by side on equally sized axes as shown in \cref{fig:opt-fisher-comparison-n=2,fig:opt-fisher-comparison-n=3}.

	\begin{figure}[htbp]
		\centering
		\begin{center}
			\begin{tikzpicture}
				\begin{axis}[FI*plot, legend pos = north west, ylabel = {\FI*[2]{\lambda}}]
				\addplot table [y=FI*] {n=2/plotData,lambda=0.5.csv};
				\addplot table [y=FI*] {n=2/plotData,lambda=0.8.csv};
				\addplot table [y=FI*] {n=2/plotData,lambda=1.0.csv};
				\addplot table [y=FI*] {n=2/plotData,lambda=2.0.csv};
				\addplot table [y=FI*] {n=2/plotData,lambda=3.0.csv};
				\legend{\(\lambda=0.5\),\(\lambda=0.8\),\(\lambda=1.0\),\(\lambda=2.0\),\(\lambda=3.0\)}
				\end{axis}
			\end{tikzpicture}
			\caption{Comparison of Optimal Fisher information for \( n=2 \)}
			\label{fig:opt-fisher-comparison-n=2}
		\end{center}
	\end{figure}

	\begin{figure}[htbp]
		\centering
		\begin{center}
			\begin{tikzpicture}
				\begin{axis}[FI*plot, legend pos = north west, ylabel = {\FI*[3]{\lambda}}]
				\addplot table [y=FI*] {n=3/plotData,lambda=0.5.csv};
				\addplot table [y=FI*] {n=3/plotData,lambda=0.8.csv};
				\addplot table [y=FI*] {n=3/plotData,lambda=1.0.csv};
				\addplot table [y=FI*] {n=3/plotData,lambda=2.0.csv};
				\addplot table [y=FI*] {n=3/plotData,lambda=3.0.csv};
				\legend{\(\lambda=0.5\),\(\lambda=0.8\),\(\lambda=1.0\),\(\lambda=2.0\),\(\lambda=3.0\)}
				\end{axis}
			\end{tikzpicture}
			\caption{Comparison of Optimal Fisher information for \( n=3 \)}
			\label{fig:opt-fisher-comparison-n=3}
		\end{center}
	\end{figure}
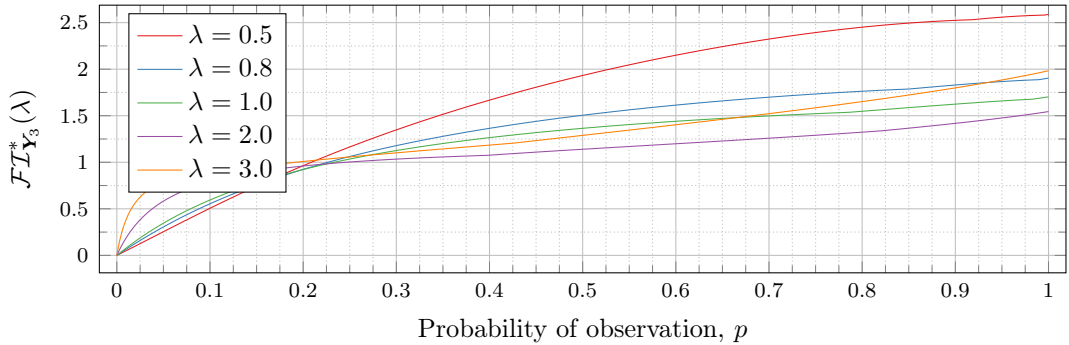

	\subsection{Timings} 
	\label{sub:timings}
	All computations reported in this section were performed on Intel(R) Xeon(R) Gold 6150 CPU's running at 2.70GHz.
	All computations for \( n=2 \) were performed single-threaded, whereas all computations for \( n \ge 3 \) were performed multi-threaded using 36 simultaneous threads.

	The reason for the single-threaded computations for \( n=2 \) is that we found the multi-threaded implementation was slower than the single-threaded implementation for small values of \( \lambda \) (when \(n=2\)).

	An individual computation of \FI{\lambda} was so fast that the time required to initialize the threading requirements was significant enough to overshadow the time required to compute with a single thread.
	As such, and because running all computations for a single \(n\) with the same \textit{Maple} scripts was easier, all computations for \( n=2 \) were performed single-threaded even though some of these computations might have benefited from multi-threading.

	We note that the scheduling software placed an upper limit of 300 hours (12.5 days) on a single computation.
	The only computation for which this 300 hour limitation was a concern was the computation of the graph data for \( n=3 \) and \( \lambda = 4.0 \).
	Although the code was written to allow a terminated computation to recommence from the point of termination, the saved data that allowed for recommencement was at the granularity of completed optimizations.
	Consequently, the optimization being performed when the computation terminated at the 300-hour boundary needed to begin again when the computation resumed in the next 300 hour block, and so some amount of re-computation was unavoidable.

	The optimization time for \FI{\lambda} grows as \( n \) increases.
	Additionally, for fixed \( n \), the optimization times increase as \( \lambda \) increases, and as \( p \) increases.

	The time taken to bound \DV{\lambda} in an open interval of width less than \( 10^{-6} \) are given in \cref{tab:dropvalue_timings}.
	We show the total time, the number of optimizations performed (i.e., different values of \(p\)), and the fastest, slowest, and average times for individual optimizations.

	Note that because of the nature of the binary search, the optimizations will tend to clump around the drop values.
	If those drop values are small, we will be performing more of the faster optimizations.
	Conversely if the drop values are large, we will be performing more of the slower optimizations.
	As such, the table should not be taken as a strong indication of the relative speed of the computation times for different parameters \( n \) and \( \lambda \).

	Also note that because we have \( n-1 \) drop values, the number of optimizations performed must grow as \( n \) increases.

	\begin{table}
		\begin{center}
			\footnotesize
			\begin{tabular}{ccrc r r r}
				\toprule
				\textbf{n} & \(\bm{\lambda}\) & \textbf{Time} & \textbf{\#Opt} & \textbf{Fastest} & \textbf{Slowest} & \textbf{Average} \\
				\midrule
				2 & 0.5 & 0.8s & 20 & 0.0s & 0.0s & 0.0s \\
				2 & 0.8 & 1.5s & 20 & 0.0s & 0.1s & 0.1s \\
				2 & 1.0 & 2.4s & 20 & 0.0s & 0.2s & 0.1s \\
				2 & 2.0 & 8.2s & 20 & 0.3s & 0.8s & 0.4s \\
				2 & 3.0 & 15.4s & 20 & 0.6s & 1.9s & 0.8s \\
				2 & 4.0 & 29.0s & 20 & 0.3s & 11.3s & 1.4s \\
				2 & 5.0 & 2m\,29.7s & 20 & 0.5s & 1m\,47.4s & 7.5s \\
				\midrule
				3 & 0.5 & 7m\,\phantom{0}5.4s & 36 & 4.9s & 15.1s & 11.8s \\
				3 & 0.8 & 12m\,13.0s & 37 & 7.2s & 25.2s & 19.8s \\
				3 & 1.0 & 16m\,45.9s & 37 & 10.0s & 34.7s & 27.2s \\
				3 & 2.0 & 1h\,23m\,43.8s & 39 & 16.4s & 3m\,47.9s & 2m\,\phantom{0}8.8s \\
				3 & 3.0 & 3h\,12m\,59.8s & 38 & 40.5s & 13m\,59.1s & 5m\,\phantom{0}4.7s \\
				3 & 4.0 & 6h\,42m\,59.2s & 37 & 1m\,\phantom{0}0.4s & 3h\,15m\,39.2s & 10m\,53.5s \\
				\midrule
				4 & 0.5 & 9h\,\phantom{0}4m\,\phantom{0}3.1s & 52 & 1m\,\phantom{0}8.0s & 15m\,58.1s & 10m\,27.7s \\
				4 & 0.8 & 1d\,8h\,18m\,\phantom{0}2.8s & 53 & 4m\,33.3s & 58m\,40.4s & 36m\,34.0s \\
				4 & 1.0 & 3d\,1h\,12m\,\phantom{0}4.3s & 54 & 11m\,37.2s & 2h\,14m\,38.6s & 1h\,21m\,20.1s \\
				\bottomrule
			\end{tabular}
		\end{center}
		\caption{Computation Times for \DV{\lambda}}
		\label{tab:dropvalue_timings}
	\end{table}

	The time taken to compute the data required to produce the graphs of \(t_i^*\) in \cref{SecNE_Results} are given in \cref{tab:plot-ti-star_timings}.
	Note that more optimizations are performed for the graph creation (compared with the drop-value computation); however, only values of \(p\) between the drop values are optimized for.
	In particular, no values of \( p \) before \DV[1]{\lambda} are ever optimized.

	Furthermore, as \( \lambda \) increases, we see (as discussed above) a decrease in the drop values, and an increase in the size of the intervals between the drop values.
	Consequently, more values of \( p \) need to be optimized to meet the requirements enforced in the plotting of the graphs (see above), resulting in a two-fold slow down where the amount of time per optimization increases as does the number of optimizations performed.
	We see this effect in the drastic increase in the time taken as \( \lambda \) increases for fixed \( n \) compared with the moderate increase in the average optimization times.

	In the case of \( n=3 \) and \( \lambda=4.0 \), the 300 hour upper computation limit and corresponding resumptions and unavoidable partial re-computation yield only an approximate computation time.
	A total of six interruptions (and thus resumptions) occurred during the computation, with the (partially) recomputed optimizations taking approximately 9, 15, 25, 32, 37, and 41 hours, respectively.
	As such, the reported time is an overestimate of the “true” computation time, and the discrepancy could be---in the worst case---in the vicinity of of six days and 15 hours.
	Nonetheless, even if the discrepancy is that large, it is not sufficient to undermine the broad pattern of growth we see as \( \lambda \) increases.

	\begin{table}
		\begin{center}
			\footnotesize
			\begin{tabular}{ccrc r r r}
				\toprule
				\textbf{n} & \(\bm{\lambda}\) & \textbf{Time} & \textbf{\#Opt} & \textbf{Fastest} & \textbf{Slowest} & \textbf{Average} \\
				\midrule
				2 & 0.5 & 1.3s & 24 & 0.0s & 0.0s & 0.0s \\
				2 & 0.8 & 2.2s & 22 & 0.1s & 0.1s & 0.1s \\
				2 & 1.0 & 3.1s & 20 & 0.1s & 0.2s & 0.1s \\
				2 & 2.0 & 1m\,\phantom{0}3.8s & 85 & 0.4s & 1.0s & 0.7s \\
				2 & 3.0 & 7m\,42.4s & 146 & 0.7s & 6.5s & 3.2s \\
				2 & 4.0 & 55m\,52.9s & 177 & 0.9s & 1m\,\phantom{0}1.5s & 18.9s \\
				2 & 5.0 & 7h\,13m\,55.2s & 190 & 0.9s & 6m\,38.9s & 2m\,17.0s \\
				\midrule
				3 & 0.5 & 8m\,57.4s & 44 & 9.8s & 15.6s & 12.2s \\
				3 & 0.8 & 17m\,15.5s & 48 & 17.3s & 25.1s & 21.6s \\
				3 & 1.0 & 36m\,28.4s & 70 & 21.7s & 37.7s & 31.2s \\
				3 & 2.0 & \phantom{0}5h\,29m\,41.3s & 121 & 52.6s & 5m\,32.5s & 2m\,43.5s \\
				3 & 3.0 & \phantom{0}3d\,\phantom{0}4h\,\phantom{0}1m\,24.7s & 167 & 58.3s & 1h\,36m\,20.6s & 27m\,18.8s \\
				3 & 4.0 & 78d\,\hfill approx\hfill{} & 191 & 1m\,9.9s & 1d\,17h\,42m\,33.2s & 8h\,20m\,51.2s \\
				\midrule
				4 & 0.5 & 16h\,19m\,\phantom{0}5.0s & 84 & 6m\,\phantom{0}6.6s & 16m\,\phantom{0}3.4s & 11m\,39.2s \\
				4 & 0.8 & 2d\,\phantom{0}6h\,37m\,18.8s & 78 & 24m\,59.9s & 54m\,25.4s & 42m\,\phantom{0}0.9s \\
				4 & 1.0 & 5d\,11h\,35m\,29.1s & 90 & 40m\,33.8s & 2h\,\phantom{0}9m\,50.7s & 1h\,27m\,43.6s \\
				\bottomrule
			\end{tabular}
		\end{center}
		\caption{Computation Times for Graph Data}
		\label{tab:plot-ti-star_timings}
	\end{table}


	
\section{Conclusion}\label{SecConc}

Determining an optimal experimental design for a growing population governed by a \texttt{POPBP} is a difficult problem. In this article, we developed a new approach to compute the Fisher information for higher values of $n$ and $\lambda$. With the use of generating functions, we constructed recursive equations for the likelihood function $\mathcal{L}_{\bm{Y}_n}$ and its derivative, which we used to calculate the Fisher information. This approach allowed us to calculate the Fisher information more efficiently and accordingly determine which observation times maximize the Fisher information for given values of $n$, $\lambda$, and $p$.

For future work, we plan to develop further theoretical results on an optimal experimental design of a \texttt{POPBP} with the help of numerical experiments obtained in this paper.

We expect we can speed up the optimization process by using the drop values to rule out boundaries to check, thus computing fewer optimizations for any given combination of \(n, p\), and \(\lambda\).
For example, for a fixed \(n\) and \(\lambda\), if we know we are optimizing for a value of \(p\) that is larger than the upper bound of \(\DV[1]{\lambda}\), then we know \(t_i^*\) is strictly less than 1, so we can avoid optimizing over any boundaries wherein \(t_1=1\).
This technique can even work with the binary search to find the drop values if we recall that our initial assumption was that \(0 < \DV{\lambda} < 1 \) for all \(i\); however, we note that the implementation must be careful about how it handles cases where \(p\) is \emph{inside} multiple bounds simultaneously. An early test of this idea almost doubled the computation speed when we used the drop-value information (compared with the current method), but we have not yet finished implementation and testing, nor do we have properly rigorous timing data.

Finally, recent advances in GPU technologies (particularly in memory sizes) open the possibility of implementing the highly parallel nature of our computations on GPU hardware.

	
\section*{Acknowledgements}\label{SecAck}

We would like to thank Prof. Nigel Bean and Prof. Joshua Ross from University of Adelaide for discussing topics in \cref{SecPOPBP}, in particular the proof of \cref{ProTau1} with the first author. An early version of this work was discussed with Alin~Bostan and Fr\'ed\'eric Chyzak at Inria in~2014.
	
	
	
\bibliographystyle{plainnat}
\bibliography{References}
	

\end{document}